\title{On Higher Dimensional Milnor Frames}
\newcommand{\bb}{\mathbb}
\newcommand{\ad}{\text{ad}}
\newcommand{\where}{|}
\newcommand{\ric}{\text{Ric}}
\newcommand{\Span}{\text{Span}}
\newcounter{Theorem}
\newtheorem{theorem}[Theorem]{Theorem}
\newtheorem{definition}[Theorem]{Definition}
\newtheorem{proposition}[Theorem]{Proposition}
\newtheorem{lemma}[Theorem]{Lemma}
\newtheorem{question}[Theorem]{Question}
\newtheorem{remark}[Theorem]{Remark}
\newtheorem{example}[Theorem]{Example}
\newtheorem{corollary}[Theorem]{Corollary}
\newcommand{\spa}{\text{Span}}
\author{\small{Hayden Hunter }\\ \scriptsize{University of Florida}\\ Gainesville, FL, USA\\ \footnotesize{\textsf{haydenhunter@ufl.edu}}}
\date{}
\begin{document}
	\maketitle
	\begin{abstract}
		A classic result of Milnor shows that any 3-dimensional unimodular metric Lie algebra admits an orthonormal frame with at most three nontrivial structure constants. These frames are referred to as Milnor frames. We define extensions of Milnor frames into higher dimensions and refer to these higher dimensional analogues as Lie algebras with Milnor frames. We determine that $n$-dimensional, $n \geq 4$, Lie algebras with Milnor frames are isomorphic to the direct sum of 3-dimensional Heisenberg Lie algebras $\mathfrak{h}^3$, 4-dimensional 3-step nilpotent Lie algebras $\mathfrak{h}^4$, and an abelian Lie algebra $\mathfrak{a}$. Moreover, for any Lie algebra $\mathfrak{g}\not\cong \mathfrak{h}^3 \oplus \mathfrak{a}$ with a Milnor frame, there exists an inner product structure $g$ on $\mathfrak{g}$ such that $(\mathfrak{g}, g)$ does not admit an orthonormal Milnor frame. 
	\end{abstract}
	\newpage
	\tableofcontents
	\newpage
	\setcounter{section}{0}
	\section{Introduction}

	\indent \indent 
	Given a Lie Group $G$ and a (Riemannian) metric\footnote{For the remainder of the paper any use of the word ``metric'' means ``Riemannian metric''} $g$ on $G$, we say that $g$ is left-invariant if, for any $p \in G$, the diffeomorphism 
	\begin{equation}
		L_p: G \to G, \quad q \mapsto pq
	\end{equation}
	is an isometry with respect to $g$. A vector field $X$ on $G$ is said to be left-invariant if $L_p^*(X) = X$ for any $p \in G$. By placing an inner product structure on the tangent plane at the identity, $T_eG$, we can extend this inner product to a left-invariant metric on $G$ (See for example Chapter 1 Section 2 in \cite{MR1138207}). The collection of left-invariant metrics on $G$ are in one-to-one correspondence with inner product structures on $T_eG$. Thus given a left-invariant metric on a simply connected Lie Group $G$, we only need to look at the inner product structure of its associated Lie algebra. With this in mind, whenever a simply connected Lie Group $G$ is equipped with a left-invariant metric $g$, we may call the Lie algebra $(\mathfrak{g},g)$ associated to the Lie Group a metric Lie algebra with respect to the metric $g$. When it is obvious which left-invariant metric we are referring to, we say that $\mathfrak{g}$ is a metric Lie algebra. \\
	
	\indent When $G$ is equipped with the Levi-Civita connection, the curvature is determined by the structure constants induced by a Lie bracket $[\, , \,] \in \wedge^2 \mathfrak{g}^* \otimes \mathfrak{g}$ (See sections 5 and 6 in \cite{MR425012}). Thus if a metric Lie algebra has few nontrivial structure constants, determining curvature requires fewer computations. For example, Milnor \cite[Lemma 4.1]{MR425012} determined that any $3$-dimensional unimodular metric Lie algebra admits an orthonormal frame with at most $3$  structure constants. Moreover, these orthonormal frames diagonalize the Ricci-tensor. For higher dimensional metric Lie algebras, Hashinaga, Tamaru, and Terada \cite{MR3488140} introduced a procedure that allows us to find an orthonormal frame with relatively few structure constants.\\
	
	\indent Let $\mathfrak{g}$ be a three-dimensional metric Lie algebra with metric $g$ and a fixed orientation. For two linearly independent vectors $X,Y \in \mathfrak{g}$, denote the exterior product $X \wedge Y$ to be the unique vector orthogonal to $X$ and $Y$ such that $\{X,Y, X \wedge Y\}$ is a positively oriented frame and 
	$$
	||X \wedge Y||^2 := g(X\wedge Y, X\wedge Y) = g(X,X)g(Y,Y) - g(X,Y)^2.
	$$
	If $X$ and $Y$ are not linearly independent then we say that $X \wedge Y = 0$. By the universal property of wedge products there exists a linear operator $L: \mathfrak{g} \to \mathfrak{g}$ for which the diagram
	
	\begin{center}
		\begin{tikzpicture}[scale = 2]
			\node(A)at(0,1){$\mathfrak{g}\times \mathfrak{g}$}; 
			\node(C)at(1,1){$\mathfrak{g}$}; 
			\node(D)at(1,0){$\mathfrak{g}$}; 
			\path[->,font=\scriptsize] 
			(A)edgenode[above]{$\wedge$}(C) 
			(A)edgenode[left]{$[\,,\,]$}(D) 
			(C)edgenode[right]{$L$}(D);
		\end{tikzpicture}
	\end{center}
	\noindent commutes. Moreover $\mathfrak{g}$ is unimodular if and only if $L$ is self-adjoint~\cite[Lemma 4.1]{MR1138207}. It is well-known that a self-adjoint linear operator admits an orthonormal frame of eigenvectors. Thus there is an orthonormal frame $\{X_1, X_2,X_3\}$ and real numbers $\lambda_1, \lambda_2,\lambda_3 \in \bb R$ such that 
	$$
	[X_1,X_2] = \lambda_3X_3, \quad [X_2,X_3] = \lambda_1X_1, \quad [X_3,X_1] = \lambda_2X_2. 
	$$
	Let $\sigma = (1 \, 2 \, 3) \in S_3$ where $S_3$ is the permutation group acting on $3$ elements. We can rewrite the bracket identity so that for any $i \in \{1,2,3\}$, $[X_i, X_{\sigma(i)}] = \lambda_{\sigma^2(i)} X_{\sigma^2(i)}$. 
	
	\begin{definition}
		Let $\mathfrak{g}$ be a Lie algebra with a frame $\{X_1, X_2,X_3\}$. If there exists $\lambda_1,\lambda_2,\lambda_3 \in \bb R$ such that the bracket relation $[X_i,X_{\sigma(i)}] = \lambda_{\sigma^2(i)} X_{\sigma^2(i)}$ holds for any $1 \leq i \leq 3$, then $\{X_1,X_2,X_3\}$ is called a Milnor frame. 
	\end{definition}
	
	\indent Using the permutation $\sigma = (1 \,\ldots \, n)$ we can extend this notion to higher dimensions. 
	\begin{definition}\label{HDMFDef}
		Let $\mathfrak{g}$ be a Lie algebra of dimension $n \geq 4$ and $\sigma = (1 \, \ldots \, n) \in S_n$ where $S_n$ is the permutation group acting on $n$ elements. A frame, $\{X_1, \ldots, X_n\}$, is a Milnor frame if for all $i,j \in \{1, \ldots, n\}$, there exist $\lambda_1, \ldots, \lambda_n \in \bb R$ such that
		\begin{equation}
			[X_i,X_j] = \begin{cases} \lambda_{\sigma^2(i)}X_{\sigma^2(i)} & \sigma(i) = j \\
				-\lambda_{\sigma^2(j)}X_{\sigma^2(j)} & \sigma(j) = i \\
				0 & \text{otherwise}
			\end{cases}
		\end{equation}
	\end{definition}	
	An example of a Lie algebra with a Milnor frame of dimension $n \geq 4$ is the Lie algebra, $\mathfrak{h}^3$, associated to the 3-dimensional Heisenberg Group directly summed with an abelian Lie algebra, $\mathfrak{a}$, of dimension $n \geq 1$. For a less trivial example let $\mathfrak{h}^4$ be the $4$-dimensional Lie algebra with a frame $\{X_1,X_2,X_3,X_4\}$ whose bracket relations are generated by 
	$$
	[X_1,X_2] = X_3 \quad [X_2,X_3] = X_4.
	$$ 
	
	\noindent As we shall show in section~\ref{Examples:h3h4}, $\mathfrak{h}^4$ is a 3-step nilpotent Lie algebra and is thus not isomorphic to $\mathfrak{h}^3 \oplus \mathfrak{a}$. It turns out that any Lie algebra with a Milnor frame is completely determined by these two types of Lie algebras: 
	\begin{theorem}{\label{splitting}}
		For any Lie algebra $\mathfrak{g}$ of dimension $n \geq 4$ with a Milnor frame, $\mathfrak{g} \cong (\oplus \mathfrak{h}^3) \oplus (\oplus \mathfrak{h}^4) \oplus \mathfrak{a}$ where $\mathfrak{h}^3$ is the Lie algebra of the Heisenberg Group, $\mathfrak{h}^4$ is as seen in Example~\ref{adh4}, and $\mathfrak{a}$ is an abelian Lie Alebra. Moreover, these Lie algebras are at most 3-step nilpotent. 
	\end{theorem}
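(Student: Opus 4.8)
The plan is to work entirely with the structure constants in the given Milnor frame, extract every constraint the Jacobi identity imposes, and then read off the isomorphism type from a purely combinatorial description of which $\lambda_i$ vanish. Writing indices modulo $n$, Definition~\ref{HDMFDef} says that the only nontrivial brackets are $[X_i,X_{i+1}]=\lambda_{i+2}X_{i+2}$ together with their antisymmetric partners, so $[X_i,X_j]=0$ whenever $i$ and $j$ are non-consecutive on the cycle $\mathbb{Z}/n\mathbb{Z}$. The entire content of the theorem is therefore encoded in the support set $S=\{\,j : \lambda_j\neq 0\,\}\subseteq\mathbb{Z}/n\mathbb{Z}$, and my first goal is to determine exactly which supports $S$ are admissible, i.e. actually define a Lie algebra.

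First I would impose the Jacobi identity. A term $[[X_p,X_q],X_r]$ can be nonzero only when $X_p,X_q$ are consecutive, say $q=p+1$ with output $X_{p+2}$, and $X_{p+2}$ is in turn consecutive with $X_r$; since $r\neq p,p+1$, this forces $r=p+3$. Hence the only triples on which the Jacobi expression is not identically zero are of the form $\{i,i+1,i+3\}$. Evaluating the three cyclic terms there, the pairings $(X_{i+1},X_{i+3})$ and $(X_{i+3},X_i)$ involve non-consecutive indices and drop out — one must separately check $n=4$ and $n=5$, where the wraparound can make indices that are ``far'' in $\mathbb{Z}$ adjacent in $\mathbb{Z}/n\mathbb{Z}$, but in the $n=4$ case the surviving bracket lands on a repeated index $[X_{i+1},X_{i+1}]$ and still vanishes — leaving the single relation
\[
\lambda_{i+2}\lambda_{i+4}=0.
\]
Reindexing, the Jacobi identity holds if and only if $\lambda_j\lambda_{j+2}=0$ for every $j$; equivalently, $S$ contains no two elements at distance $2$ on the cycle.

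With this constraint secured I would decompose $\mathfrak{g}$ combinatorially. The distance-$2$ condition forces every maximal run of consecutive indices in $S$ to have length $1$ or $2$ (a run of length $3$ would contain two indices at distance $2$) and forces consecutive runs to be separated by a gap of at least $2$. Each $k\in S$ contributes the relation $[X_{k-2},X_{k-1}]=\lambda_kX_k$, tying together the consecutive indices $k-2,k-1,k$. An isolated $k$ then yields a subalgebra on $X_{k-2},X_{k-1},X_k$ with a single nontrivial bracket, hence a copy of $\mathfrak{h}^3$ after rescaling $X_k$; a run $\{k,k+1\}$ yields the subalgebra on $X_{k-2},X_{k-1},X_k,X_{k+1}$ with $[X_{k-2},X_{k-1}]=\lambda_kX_k$ and $[X_{k-1},X_k]=\lambda_{k+1}X_{k+1}$, a copy of $\mathfrak{h}^4$ after rescaling. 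Every remaining index $i$, characterized by $i,i+1,i+2\notin S$, gives a vector $X_i$ that is both central and outside $[\mathfrak{g},\mathfrak{g}]$, contributing a one-dimensional abelian summand. Using the gap estimate I would check that the index sets of distinct blocks are disjoint and exhaust $\{1,\dots,n\}$, and that any bracket between basis vectors of different blocks vanishes: the only candidate nonzero bracket is between consecutive indices straddling a block boundary, but the relevant structure constant is exactly a $\lambda$ that the run/gap structure forces to be zero. Hence $\mathfrak{g}$ splits as a Lie-algebra direct sum of these blocks, giving $\mathfrak{g}\cong(\oplus\mathfrak{h}^3)\oplus(\oplus\mathfrak{h}^4)\oplus\mathfrak{a}$.

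I expect the delicate part to be the Jacobi computation rather than the decomposition: one must be confident that $\lambda_j\lambda_{j+2}=0$ captures \emph{all} constraints and that no extra identity or spurious extra bracket sneaks in through the cyclic wraparound in the low-dimensional cases $n=4,5$, since a single overlooked constraint would enlarge or shrink the admissible supports and break the classification. Once the equivalence ``Jacobi $\Leftrightarrow$ no two elements of $S$ at distance $2$'' is in place, the decomposition is bookkeeping. The final nilpotency claim follows at once: $\mathfrak{h}^3$ is $2$-step nilpotent, $\mathfrak{h}^4$ is $3$-step nilpotent (as shown in Section~\ref{Examples:h3h4}), and $\mathfrak{a}$ is abelian, so a finite direct sum of them is nilpotent of step equal to the maximum, hence at most $3$-step.
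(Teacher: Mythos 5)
Your proof is correct, but it organizes the decomposition in a genuinely different way from the paper. Both arguments hinge on the same key lemma: the Jacobi identity applied to triples $\{X_i,X_{i+1},X_{i+3}\}$ forces $\lambda_j\lambda_{j+2}=0$ for all $j$ (the paper's Proposition~\ref{Nilpotent}), and your derivation of it, including the $n=4$ wraparound check where the surviving extra term is $\lambda_{i+1}[X_{i+1},X_{i+1}]=0$, matches the paper's. From there the paper proceeds by induction on dimension: after a cyclic shift it assumes $\lambda_3\neq 0$ and $\lambda_2=0$, splits off $\Span\{X_1,X_2,X_3\}\cong\mathfrak{h}^3$ or $\Span\{X_1,\ldots,X_4\}\cong\mathfrak{h}^4$ as an ideal (identified via the rescaling Lemmas~\ref{scaleh3} and~\ref{scaleh4}), and then must verify that the complementary span is an ideal that again carries a Milnor frame (with separate treatment when that complement has dimension at most two) before invoking the inductive hypothesis. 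You instead classify the support $S=\{j:\lambda_j\neq 0\}$ globally --- no two elements at cyclic distance $2$, hence maximal runs of length at most $2$ separated by gaps of at least $2$ --- and read off all the $\mathfrak{h}^3$, $\mathfrak{h}^4$, and abelian blocks simultaneously. Your route is induction-free and makes the admissible structure-constant patterns completely explicit; its price is the combinatorial bookkeeping (disjointness and exhaustion of the blocks, vanishing of brackets straddling block boundaries, cyclic wraparound), which you correctly flag and which does go through: a nonzero straddling bracket $[X_i,X_{i+1}]=\lambda_{i+2}X_{i+2}$ would force $i$, $i+1$, $i+2$ into a single block, a contradiction. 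The final nilpotency claim is obtained identically in both treatments, from the decomposition together with the step computations for $\mathfrak{h}^3$ and $\mathfrak{h}^4$ in Section~\ref{Examples:h3h4}.
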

	\indent A Corollary of Theorem~\ref{splitting} shows that any $n$-dimensional, $n \geq 4$, Lie algebra with a Milnor frame admits a Milnor frame whose structure constants are either $0$ or $1$. By a result of Malcev \cite{MR0039734}, the simply connected Lie group associated to such a Lie algebra has compact quotient. If a metric Lie algebra has a Milnor frame that is also orthonormal, then we obtain some ``nice'' geometric properties. For example, given an $n$-dimensional nilpotent metric Lie algebra $(\mathfrak{g},g)$ with an orthonormal frame $\{X_1, \ldots, X_n\}$, Theorem 1.1 of \cite{MR3080187} determines an algebraic constraint for when $\{X_1, \ldots, X_n\}$ is Ricci diagonalizable. If a metric Lie algebra $(\mathfrak{g},g)$ has an orthonormal Milnor frame $\{X_1, \ldots, X_n\}$, by Theorem~\ref{splitting} and Theorem 1.1 of~\cite{MR3080187}, $\{X_1, \ldots, X_n\}$ diagonalizes the Ricci tensor. For example consider the Lie algebra $\mathfrak{h}^3 \oplus \mathfrak{a}$ with $\mathfrak{a}$ being a finite dimensional Lie algebra. Corollary 5.3 of \cite{MR1958155} shows that $\mathfrak{h}^3$ has one metric up to scaling and automorphism. Thus any Lie algebra with a Milnor frame which is isomorphic to $\mathfrak{h}^3 \oplus \mathfrak{a}$ must have an orthonormal Milnor frame. This raises the question: \\

	\begin{question}
		Given a Lie algebra $\mathfrak{g}$ that admits Milnor frame and a metric $g$ on $\mathfrak{g}$, is it possible to always find an orthonormal Milnor frame? 
	\end{question}
	\noindent Surprisingly, we find that this is not necessarily true. 
	\begin{theorem}{\label{onlyone}}
		For any nonabelian Lie algebra $\mathfrak{g}$ as in Theorem~\ref{splitting} which is not isomorphic to $\mathfrak{h}^3 \oplus \mathfrak{a}$, $\mathfrak{a}$ abelian, there exists a metric $g$ on $\mathfrak{g}$ such that $(\mathfrak{g}, g)$ does not admit an orthonormal Milnor frame. 
	\end{theorem}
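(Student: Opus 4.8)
The plan is to exploit the extreme rigidity of orthonormal Milnor frames: I will compare the continuous freedom in an orthonormal frame against the number of scalar equations the relations of Definition~\ref{HDMFDef} impose, and then certify the comparison with an explicit metric. The key structural observations are metric independent. The center $\mathfrak z(\mathfrak g)$ and the commutator ideal $\mathfrak d=[\mathfrak g,\mathfrak g]$ are fixed subspaces of the vector space $\mathfrak g$, and by Theorem~\ref{splitting} their dimensions and relative position are read off the summands: each $\mathfrak h^3$ contributes a $1$-dimensional piece of $\mathfrak d$ that lies inside $\mathfrak z$; each $\mathfrak h^4$ contributes a $2$-dimensional piece of $\mathfrak d$ meeting $\mathfrak z$ in a line; and $\mathfrak a\subseteq\mathfrak z$ meets $\mathfrak d$ trivially. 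Two rigidity facts follow for any orthonormal Milnor frame $\{X_1,\dots,X_n\}$. First, a frame vector has all its brackets zero exactly when it lies in $\mathfrak z$, so the ``central'' frame vectors form a $g$-orthonormal basis of $\mathfrak z$ and every non-central $X_i$ lies in $\mathfrak z^{\perp}$; similarly the chain-terminal vectors land in $\mathfrak d\cap\mathfrak z$. Second, since $i\mapsto\sigma^2(i)$ is a bijection, the brackets $[X_i,X_{\sigma(i)}]=\lambda_{\sigma^2(i)}X_{\sigma^2(i)}$ are multiples of \emph{distinct} basis vectors, hence pairwise orthogonal; an orthonormal Milnor frame therefore forces the adjacent bracket images to be mutually orthogonal and each proportional to a single frame vector, a genuine constraint on $g$.

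I would first record why $\mathfrak h^3\oplus\mathfrak a$ is genuinely exceptional. For a single Heisenberg factor the only nonzero bracket $[X_1,X_2]$ lands automatically in the $1$-dimensional space $\mathfrak d=\mathfrak z(\mathfrak h^3)$, so no closure condition survives and every metric admits such a frame; this is the content recalled from Corollary~5.3 of~\cite{MR1958155}, and it reappears as a vanishing equation-count below. Once $\mathfrak g\not\cong\mathfrak h^3\oplus\mathfrak a$ and $\mathfrak g$ is nonabelian, the classification forces $\mathfrak g$ to contain either an $\mathfrak h^4$ summand or at least two $\mathfrak h^3$ summands, and I would treat these two situations. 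In both, the abelian directions are harmless: the rigidity above confines them, together with the chain-terminal vectors, to a $g$-orthonormal basis of $\mathfrak z$, so all the geometrically active frame vectors are pushed into the fixed complement $\mathfrak z^{\perp}$ and $\mathfrak a$ adds frame freedom but no new closure relations.

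The base computation is $\mathfrak h^4=\Span\{e_1,e_2,e_3,e_4\}$ with $[e_1,e_2]=e_3,\ [e_2,e_3]=e_4$ (and $\mathfrak h^4\oplus\mathfrak a$, which reduces to it since $\mathfrak a$ is absorbed into $\mathfrak z$). Here $\mathfrak z=\Span\{e_4\}$ and $\mathfrak d=\Span\{e_3,e_4\}$, and the only Milnor patterns realizing the $3$-step nilpotent $\mathfrak h^4$ are the cyclically consecutive ones (a single nonzero $\lambda$ gives $\mathfrak h^3\oplus\mathbb{R}$, and nonconsecutive or three nonzero $\lambda$'s fail to be nilpotent or have the wrong commutator dimension). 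Rigidity then forces $X_4=\pm e_4/\lVert e_4\rVert$ and $X_3$ to be the unit vector of $\mathfrak d\cap X_4^{\perp}$, each up to sign, while the outer pair $X_1,X_2$ ranges over a single $SO(2)$ in the $2$-plane $\mathfrak z^{\perp}\cap X_3^{\perp}$. The skip relation $[X_1,X_3]=0$ forces the $e_2$-component of $X_1$ to vanish, which fixes the $SO(2)$ angle, so the remaining closure relation $[X_1,X_2]\perp X_4$ becomes a single scalar equation $E(g)=0$ on the metric with no freedom left to absorb it. A short computation shows $E\not\equiv 0$: for the metric with $g(e_1,e_3)\neq 0$, $g(e_3,e_4)=0$, and all other inner products standard, one finds $[X_1,X_2]\not\perp X_4$ for each of the finitely many candidate frames (the two choices of which outer vector initiates the chain, together with the sign choices), so $(\mathfrak h^4,g)$ admits no orthonormal Milnor frame.

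The case with two or more Heisenberg factors (and, more generally, any $\mathfrak g$ with $(k,l)\neq(1,0)$, where $k$ and $l$ count the $\mathfrak h^3$ and $\mathfrak h^4$ summands) follows the same method, with the decisive constraints now being the cross-commuting relations between distinct summands. Writing the Heisenberg centers as an orthonormal basis of the relevant part of $\mathfrak d$ and the outer vectors as $g$-orthonormal pairs $P_j\subset\mathfrak z^{\perp}$, one needs $[P_i,P_j]=0$ for $i\neq j$ and each $[X_{a_j},X_{b_j}]$ proportional to a single center. A dimension count---for $k$ Heisenberg factors the vanishing conditions number $k(k-1)(2k+1)$ against only $\tfrac12 k(5k-3)$ residual orthogonal degrees of freedom, an excess for every $k\geq 2$ and exactly zero when $k=1$---shows the relations are over-determined, and I would confirm this by exhibiting an explicit metric in which the Heisenberg planes are tilted so as not to be mutually $g$-orthogonal. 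I expect the main obstacle to be precisely this bookkeeping: arranging that a \emph{single} metric simultaneously defeats every discrete branch---all cyclic relabelings and chain reversals, all sign and pairing choices, and every way the summands' outer planes may be distributed among the frame vectors---while verifying that the central directions absorbed into $\mathfrak z$ never supply the freedom needed to repair a closure relation that already fails. Establishing the generic over-determination is comparatively automatic; pinning down one metric that is bad for all branches at once is where the real work lies.
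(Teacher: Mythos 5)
Your treatment of the $\mathfrak{h}^4$ case is essentially correct, and it is a genuinely more direct route than the paper's: the paper proves Theorem~\ref{orthonormalh4} by passing to the canonical orthonormal frame of Lemma 3 of~\cite{MR2655935} (structure constants $a,b,c$) and ruling out orthogonal transformations through a matrix computation, whereas your rigidity argument -- the center and commutator pin down $X_4$ and $X_3$ up to sign, the skip relation $[X_1,X_3]=0$ kills the $e_2$-component of $X_1$, and the one surviving closure condition $g([X_1,X_2],X_4)=0$ is then a single scalar constraint on $g$ -- is sound and self-contained. For your proposed metric ($g(e_1,e_3)=\epsilon\neq 0$, $|\epsilon|<1$, all else standard) the candidate frames are forced to be $X_4=\pm e_4$, $X_3=\pm e_3$, $X_1=\pm(1-\epsilon^2)^{-1/2}(e_1-\epsilon e_3)$, $X_2=\pm e_2$, giving $[X_1,X_2]=\pm(1-\epsilon^2)^{-1/2}(e_3+\epsilon e_4)\not\perp X_4$, so that branch of your proof does close.

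The genuine gap is the case with at least two $\mathfrak{h}^3$ summands and no $\mathfrak{h}^4$ summand; the minimal instance $\mathfrak{g}=\mathfrak{h}^3\oplus\mathfrak{h}^3$ is covered by the theorem, is not isomorphic to $\mathfrak{h}^3\oplus\mathfrak{a}$, and is untouched by your $\mathfrak{h}^4$ computation. For it you offer only a parameter count (the figures $k(k-1)(2k+1)$ versus $\tfrac12 k(5k-3)$ are asserted, not derived) together with a promise to exhibit an explicit metric, and you concede yourself that producing one metric defeating every discrete branch ``is where the real work lies.'' A dimension count cannot establish non-existence: over-determined polynomial systems routinely have solutions, so nothing short of an actual obstruction finishes this case. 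This is precisely what the paper supplies in Proposition~\ref{nonorthogonalh3h3}: if \emph{any} Lie automorphism $T$ of $\mathfrak{h}^3\oplus\mathfrak{h}^3$ carries the two Heisenberg factors to $g$-orthogonal subspaces, then the centers already satisfy $g(U_3,V_3)=0$; hence the metric $g=I+\epsilon(E_{36}+E_{63})$, $\epsilon$ small, admits no orthonormal Milnor frame (the consecutive-$\lambda$ branch is impossible outright, since the $3$-step $\mathfrak{h}^4$ cannot sit inside the $2$-step $\mathfrak{h}^3\oplus\mathfrak{h}^3$). Until you prove such a metric-independent invariant statement -- that an orthonormal Milnor frame forces the two Heisenberg centers to be $g$-orthogonal -- your argument does not cover $\mathfrak{h}^3\oplus\mathfrak{h}^3$ and therefore does not prove the theorem. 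A smaller, shared shortcoming: your reduction of a general mixed sum $(\oplus\mathfrak{h}^3)\oplus(\oplus\mathfrak{h}^4)\oplus\mathfrak{a}$ to the pure cases is also only sketched (a new orthonormal Milnor frame need not respect the original summands), but the paper is terse on this point as well, and the key missing ingredient in your write-up remains the two-Heisenberg obstruction.
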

	
	The paper will be divided into two sections. In Section~\ref{Algebraic Properties of Milnor Frames}, we discuss how the algebraic structure of Milnor frames can be interpreted as a directed graph. In addition we provide a proof of Theorem~\ref{splitting}. In Section~\ref{Geometric Properties}, we consider metric Lie algebras with a Milnor frame and determine necessary conditions for the Lie algebra to admit an orthonormal Milnor frame. We then provide a proof of Theorem~\ref{onlyone}. In Section~\ref{Orthonormal Milnor Frames} we show that a metric Lie algebra with an orthonormal Milnor frame has a diagonalizable Ricci tensor. Finally, we discuss which metrics on a metric Lie algebra with a Milnor frame are Ricci nilsolitons. 
	
	\section*{Acknowledgments} 
	I would like to thank Prof.~Luca Di Cerbo, those who attended my talk in the Topology and Dynamics seminar, my committee, and my peers for giving valuable feedback. In addition, I would like to thank Prof.~Michael Jablonski and the faculty at OU for giving me the opportunity to present my research for their Geometry and Topology Seminar and providing me with helpful commentary. I would like to thank the referees for providing suggestions that improve the quality of the paper.
	
	\section*{Funding}
	This paper is supported by the National Science Foundation (MS-2104662).
	\section{Algebraic Properties of Milnor Frames}\label{Algebraic Properties of Milnor Frames}
	
	\subsection{Extending Three Dimensional Milnor Frames}
	%
	Given an $n$-dimensional, $n \geq 4$, Lie algebra $\mathfrak{g}$ with a Milnor frame, one may ask why we chose the cyclic permutation $\sigma = (1 \, \ldots \, n)$ for Definition~\ref{HDMFDef} as opposed to using any permutation in $S_n$. Suppose we change Definition~\ref{HDMFDef} so that $\sigma$ is some permutation in $S_n$. Let $\mathfrak{g}$ be a Lie algebra which admits a Milnor frame $\{X_1, \ldots, X_n\}$ with structure constants $\lambda_1, \ldots, \lambda_n$ and $\sigma \in S_n$. We can represent $\sigma = \sigma_1\cdots \sigma_k$ as a product of disjoint cycles. Let $o(i)$ denote the order of $\sigma_i$. For any element $j \in \{1, \ldots, n\}$ such that $\sigma_i(j) \neq j$, the span of $\{X_j, X_{\sigma_i(j)}, \ldots, X_{\sigma_i(o(j)-1)}\}$ will form a subalgebra $\mathfrak{g}_i$. In fact each subalgebra admits a Milnor frame $X_j, \ldots, X_{\sigma_i(o(j)-1)}$ with structure constants $\lambda_j, \ldots, \lambda_{\sigma_i(o(j)-1)}$. Thus 
	$$
	\mathfrak{g} \cong \mathfrak{g}_1 \oplus \ldots \oplus \mathfrak{g}_k \oplus \mathfrak{a}
	$$
	where $\mathfrak{a}$ is an abelian subalgebra spanned by $\{X_j \where \sigma(j) = j\}$. Indeed for each $j$ such that $\sigma(j) = j$, 
	$$
	[X_{\sigma^{-1}(j)}, X_j] = \overbrace{[X_j, X_j]}^{=0} = [X_j, X_{\sigma(j)}] 
	$$
	so that $\ad_{X_j} = 0$. This argument shows that we only need to consider the case where $\sigma$ is a cyclic permutation. 
	\begin{remark}
		For the remainder of the paper, we shall use addition modulo $n$ instead of $\sigma = (1 \, \ldots \, n) \in S_n$ to improve readability. 
	\end{remark}
	We may represent the Lie bracket between two elements of a Milnor frame in the following way: let $\delta_{ij}$ be $1$ if $i = j$ and $0$ if $i \neq j$. For a Milnor frame $\{X_1, \ldots, X_n\}$ with structure constants $\lambda_1, \ldots, \lambda_n \in \bb R$ and $i,j \in \{1, \ldots, n\}$
	\begin{equation}
		[X_i,X_j] = \delta_{(i+1)j}\lambda_{i+2}X_{i+2} - \delta_{i(j+1)}\lambda_{i+1}X_{i+1}.
	\end{equation}
	If we let $E_{ij}$ denote the matrix whose only nonzero coefficient is $1$ at the entry $(i,j)$, then for a Milnor frame $\{X_1, \ldots, X_n\}$ with structure constants $\{\lambda_1, \ldots, \lambda_n\}$, 
	\begin{equation}
		\ad_{X_i} = \lambda_{i+2}E_{(i+2)(i+1)} - \lambda_{i+1}E_{(i+1)(i-1)}. \label{adjoint}
	\end{equation}
	
	\subsection{Examples: $\mathfrak{h}^3$ and $\mathfrak{h}^4$}\label{Examples:h3h4}
	We start with the classic example of a nilpotent Lie algebra with a Milnor frame. 
	\begin{example}
		\normalfont The 3-dimensional Heisenberg group is a subgroup of $GL(3, \bb R)$ whose elements are of the form 
		\[
		\mathcal{H} = \left\{\begin{pmatrix}1 & a & b \\ 0 & 1& c \\ 0 & 0 &1 \end{pmatrix} {\bigg|} a,b,c \in \bb R\right\}.
		\]
		The Lie algebra of $\mathcal{H}$ is
		$$
		\mathfrak{h}^3 = \left\{\begin{pmatrix}0 & a & b \\ 0 & 0 & c \\ 0 & 0 & 0  \end{pmatrix} {\bigg|} a,b,c \in \bb R\right\}.
		$$
		Consider the frame $X_1 := E_{12}, X_2:=E_{13}$ and $X_3:= E_{23}$. We obtain the bracket relations $[X_1,X_2] = X_3$ and all other bracket relations trivial. If $\mathfrak{a}$ is an abelian Lie algebra of dimension $n \geq 1$, then $\mathfrak{h}^3 \oplus \mathfrak{a}$ forms an $(n+3)$-dimensional Lie algebra with a Milnor frame.
	\end{example} 
	\begin{example}{\label{adh4}}
		\normalfont Let $\mathfrak{h}^4$ be the 4-dimensional Lie algebra with a frame $\{X_1, X_2, X_3, X_4\}$ and bracket relations 
		$$
		[X_1,X_2] = X_3, \quad  [X_2,X_3] = X_4.
		$$
		The Lie algebra $\mathfrak{h}^4$ is 3-step nilpotent as shown below: 
		\begin{alignat}{1}
			&(\mathfrak{h^4})^1 = [\mathfrak{h^4},\mathfrak{h^4}] = \text{Span}\{X_3,X_4\} \notag \\
			&(\mathfrak{h^4})^2 = [\mathfrak{h^4}, (\mathfrak{h^4})^1] = \text{Span}\{X_4\} \notag \\
			&(\mathfrak{h^4})^3 = [\mathfrak{h^4}, (\mathfrak{h^4})^2] = \{0\}
		\end{alignat} 
	\end{example}
	Because every 2-dimensional Lie algebra is either abelian or non-nilpotent, $\mathfrak{h}^4$ cannot be split into two 2-dimensional Lie algebras. The Lie algebra $\mathfrak{h}^3$ is at most 2-step nilpotent as shown in~(\ref{2step})
	\begin{alignat}{1}
		&(\mathfrak{h^3})^1 = [\mathfrak{h^3}, \mathfrak{h^3}] = \text{Span}\{X_3\} \notag \\
		&(\mathfrak{h^3})^2 = [\mathfrak{h^3}, (\mathfrak{h^3})^1] = \{0\}. \label{2step}
	\end{alignat}
	Thus $\mathfrak{h}^4$ is not isomorphic to $\mathfrak{h}^3 \oplus \mathfrak{a}$.\\
	
	Given a Lie algebra of dimension $n \geq 4$ with a Milnor frame the upper bound for the total number of nontrivial structure constants is at most $n$. The next proposition shows that this upper bound cannot be achieved. 
	\begin{proposition}{\label{Nilpotent}}
		Let $\mathfrak{g}$ be an $n$-dimensional Lie algebra, $n \geq 4$, with a Milnor frame $\{X_1, \ldots, X_n\}$ and structure constants $\lambda_1, \ldots, \lambda_n$. Then $\lambda_i\lambda_{i+2} = 0$ for any $i \in \{1, \ldots, n\}$. 
	\end{proposition}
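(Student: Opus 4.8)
The plan is to extract the relation $\lambda_i\lambda_{i+2}=0$ from the Jacobi identity, which is the only constraint on the structure constants beyond bilinearity and antisymmetry. The structural feature I would exploit is that, by Definition~\ref{HDMFDef}, $[X_a,X_b]$ is nonzero only when $a$ and $b$ are \emph{adjacent}, i.e. $b\equiv a\pm 1 \pmod n$. Consequently, in any Jacobi triple most of the nested brackets vanish automatically, and a product $\lambda_i\lambda_{i+2}$ can survive only when the two surviving brackets chain together across a single index gap. Equivalently, one may work with the homomorphism identity $\ad_{[X_a,X_b]}=[\ad_{X_a},\ad_{X_b}]$ and compare coefficients using the explicit matrix form~(\ref{adjoint}).

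For the generic case $n\geq 5$ I would apply the Jacobi identity to the triple $\{X_i,X_{i+1},X_{i+3}\}$. Here $[X_{i+1},X_{i+3}]=0$ and $[X_{i+3},X_i]=0$, since the indices $i+1,i+3$ and $i+3,i$ differ by $2$ and $3$ respectively and hence are non-adjacent once $n\geq 5$. Only one cyclic term survives, namely
\[
[X_{i+3},[X_i,X_{i+1}]]=\lambda_{i+2}[X_{i+3},X_{i+2}]=-\lambda_{i+2}\lambda_{i+4}X_{i+4}.
\]
The Jacobi identity then forces $\lambda_{i+2}\lambda_{i+4}=0$, and reindexing $i\mapsto i-2$ yields $\lambda_i\lambda_{i+2}=0$ for every $i$. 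The same conclusion drops out of~(\ref{adjoint}): the coefficient of $E_{(i+4)(i+3)}$ in $[\ad_{X_i},\ad_{X_{i+1}}]$ vanishes, while that of $\ad_{[X_i,X_{i+1}]}=\lambda_{i+2}\ad_{X_{i+2}}$ equals $\lambda_{i+2}\lambda_{i+4}$.

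The step I expect to be the main obstacle is the low-dimensional case $n=4$, where the index bookkeeping modulo $n$ breaks the genericity used above: now $X_{i+3}=X_{i-1}$, the ``gap'' wraps around, and brackets that vanished for $n\geq 5$ no longer do (a difference of $3$ becomes $-1 \bmod 4$, hence adjacent). I would instead apply the Jacobi identity to the triple $\{X_{i-1},X_i,X_{i+1}\}$, where the only nontrivial contribution is $[X_{i-1},[X_i,X_{i+1}]]=\lambda_{i+2}[X_{i-1},X_{i+2}]$; because $X_{i+2}=X_{i-2}$ is adjacent to $X_{i-1}$ in dimension $4$, this equals $-\lambda_i\lambda_{i+2}X_i$, and vanishing again gives $\lambda_i\lambda_{i+2}=0$. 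The care required throughout is simply to verify, for each candidate Jacobi triple, exactly which ``difference-two'' and ``difference-three'' brackets are genuinely zero as a function of $n$ (i.e. when $2$ or $3$ is congruent to $\pm 1$ modulo $n$); once that is pinned down the computation is routine.
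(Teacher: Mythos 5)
Your proof is correct and is essentially the paper's own argument: the paper applies the Jacobi identity to the single triple $\{X_i,X_{i+1},X_{i+3}\}$ for all $n\geq 4$, handling the wrap-around at $n=4$ by noting that the third term is then $\lambda_{i+1}[X_{i+1},X_{i+1}]=0$ rather than $[X_{i+1},0]$. Your separate $n=4$ case uses the same set of three frame vectors (since $i+3\equiv i-1 \pmod{4}$), so your two-case computation coincides with the paper's unified one up to relabeling.
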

	
	\begin{proof}
		For any $i \in \{1, \ldots, n\}$, 
		\begin{alignat}{1}
			&[X_{i+3}, [X_i,X_{i+1}]] = \lambda_{i+2}[X_{i+3}, X_{i+2}] = \lambda_{i+2}\lambda_{i+4}X_{i+4} \notag \\
			&[X_i,[X_{i+1},X_{i+3}]] = [X_i,0] = 0 \notag \\
			&[X_{i+1},[X_{i+3},X_i]] = \begin{cases}[X_{i+1}, 0] = 0 & n > 4 \\
				\lambda_{i+1}[X_{i+1},X_{i+1}]  = 0 & n = 4\end{cases} \notag 
		\end{alignat}
		By the Jacobi Identity
		$$
		0 = [X_{i+3},[X_i,X_{i+1}]] + [X_i,[X_{i+1},X_{i+3}]] + [X_{i+1},[X_{i+3},X_i]] = \lambda_{i+2}\lambda_{i+4}X_{i+4}.
		$$
		Considering that $i$ was arbitrary, we obtain the desired identity. 
	\end{proof}
	Before we prove Theorem~\ref{splitting} we prove the following lemmas.
	\begin{lemma}{\label{scaleh3}}
		Let $\mathfrak{g}$ be a 3 dimensional Lie algebra with a Milnor frame with exactly one nontrivial structure constant. Then $\mathfrak{g} \cong \mathfrak{h}^3$ 
	\end{lemma}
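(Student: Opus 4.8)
The plan is to reduce to a canonical form by rescaling a single basis vector. A three-dimensional Milnor frame $\{X_1, X_2, X_3\}$ with structure constants $\lambda_1, \lambda_2, \lambda_3$ satisfies
$$[X_1, X_2] = \lambda_3 X_3, \quad [X_2, X_3] = \lambda_1 X_1, \quad [X_3, X_1] = \lambda_2 X_2,$$
and this system is invariant, up to a cyclic permutation of the constants, under the relabeling induced by $\sigma = (1\,2\,3)$. First I would exploit this cyclic symmetry to assume without loss of generality that the unique nonzero structure constant is $\lambda_3$, so that $\lambda_1 = \lambda_2 = 0$.

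Next I would absorb the surviving constant by rescaling. Setting $Y_1 = X_1$, $Y_2 = X_2$, and $Y_3 = \lambda_3 X_3$ produces a new basis of $\mathfrak{g}$ (here $\lambda_3 \neq 0$ is used so that $Y_3$ is nonzero), and a direct check of the three brackets gives $[Y_1, Y_2] = \lambda_3 X_3 = Y_3$, while $[Y_2, Y_3] = \lambda_3 \lambda_1 X_1 = 0$ and $[Y_3, Y_1] = \lambda_3 \lambda_2 X_2 = 0$ because $\lambda_1 = \lambda_2 = 0$. These are precisely the defining relations of $\mathfrak{h}^3$, so the linear map carrying $Y_1, Y_2, Y_3$ to the standard Heisenberg basis is a Lie algebra isomorphism, yielding $\mathfrak{g} \cong \mathfrak{h}^3$.

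The only point requiring genuine care — and the closest thing to an obstacle in an otherwise short argument — is the reduction step: I would need to confirm that permuting the indices by $\sigma$ sends a Milnor frame to a Milnor frame with the constants permuted accordingly, so that the assumption ``the nonzero constant is $\lambda_3$'' truly loses no generality. Once that is established the rescaling is routine, but it is essential that \emph{both} of the remaining constants vanish: this is exactly what guarantees the rescaled frame satisfies not only $[Y_1, Y_2] = Y_3$ but also the two vanishing relations that characterize $\mathfrak{h}^3$ among three-dimensional Lie algebras.
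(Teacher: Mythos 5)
Your proposal is correct and takes essentially the same route as the paper: relabel the frame so the unique nonzero constant sits in position $3$, then absorb it by rescaling the third basis vector, checking that the remaining brackets vanish. The paper performs the relabeling directly (setting $Y_1 = X_i$, $Y_2 = X_j$, $Y_3 = X_k$) rather than invoking the cyclic symmetry, but this is the same idea; your extra care in verifying that permuted frames remain Milnor frames matches the paper's later observation that the shift operator $S_\ell$ preserves Milnor frames.
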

	
	\begin{proof}
		Let $\{X_1, X_2,X_3\}$ be a Milnor frame with structure constants $\lambda_1, \lambda_2, \lambda_3$. Suppose $\lambda_k \neq 0$ for some $k$. If $\mathfrak{g}$ is defined through the bracket relation $[X_i,X_j] = \lambda_k X_k$ for some $i,j \neq k$ then by setting $Y_1 = X_i$, $Y_2 = X_j$ and $Y_3 = X_k$, we obtain $[Y_1,Y_2] = \lambda_3Y_3$. By scaling $Y_3$ to $\lambda_3Y_3$, $\{Y_1, Y_2, \lambda_3Y_3\}$ forms a Milnor frame whose structure constants are either $0$ or $1$. This shows that $\mathfrak{g} \cong \mathfrak{h}^3$.
	\end{proof}
	\begin{lemma}{\label{scaleh4}}
		Let $\mathfrak{g}$ be a 4-dimensional Lie algebra with a Milnor frame with exactly two nontrivial structure constants. Then $\mathfrak{g} \cong \mathfrak{h}^4$. 
		
		\begin{proof}
			Let $\{X_1,\ldots, X_4\}$ be a Milnor frame with structure constants $\lambda_1, \ldots, \lambda_4$. Suppose $\lambda_k,\lambda_\ell \neq 0$ are our two nontrivial structure constants. If $|k - \ell| = 2$ by Proposition~\ref{Nilpotent}, $\lambda_k\lambda_\ell = \lambda_j\lambda_{j+2} = 0$ for some $j$ which is a contradiction. Thus we may assume, without loss of generality, that $\ell = k +1$. By relabeling and rescaling, we obtain that the frame $$\{Y_1 = X_{k-2}, Y_2 = X_{k-1}, Y_3 = \lambda_kX_k, Y_4 = \frac{\lambda_{k+1}}{\lambda_k} X_{k+1}\}$$ is a Milnor frame whose structure constants are either $0$ or $1$. This gives the relation $\mathfrak{g} \cong \mathfrak{h}^4$. 
		\end{proof}
	\end{lemma}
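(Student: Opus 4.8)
The plan is to use Proposition~\ref{Nilpotent} to pin down the relative positions of the two nonzero structure constants, then apply a cyclic relabeling followed by a rescaling of the frame to match the defining relations of $\mathfrak{h}^4$ exactly.

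First I would label the frame $\{X_1,X_2,X_3,X_4\}$ with constants $\lambda_1,\ldots,\lambda_4$ and let $\lambda_k,\lambda_\ell$ (with $k\neq\ell$) denote the two nonzero ones. Working modulo $4$, the unordered pair $\{k,\ell\}$ is either adjacent (cyclic distance $1$, i.e.\ one of $\{1,2\},\{2,3\},\{3,4\},\{4,1\}$) or antipodal (cyclic distance $2$, i.e.\ $\{1,3\}$ or $\{2,4\}$). Proposition~\ref{Nilpotent} yields $\lambda_1\lambda_3=0$ and $\lambda_2\lambda_4=0$, so the antipodal case is immediately excluded and the two nonzero constants must be adjacent.

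Next I would normalize the adjacent pair. Since the Milnor-frame relations $[X_i,X_{i+1}]=\lambda_{i+2}X_{i+2}$ are invariant under the cyclic shift $X_i\mapsto X_{i+c}$, which merely reindexes the $\lambda$'s by the same shift, I can relabel so that the nonzero constants are $\lambda_3$ and $\lambda_4$ with $\lambda_1=\lambda_2=0$. The surviving bracket relations are then
\begin{equation}
[X_1,X_2]=\lambda_3 X_3,\qquad [X_2,X_3]=\lambda_4 X_4,
\end{equation}
while $[X_3,X_4]=\lambda_1 X_1=0$, $[X_4,X_1]=\lambda_2 X_2=0$, and the non-cyclically-adjacent brackets $[X_1,X_3]$ and $[X_2,X_4]$ vanish by Definition~\ref{HDMFDef}.

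Finally I would rescale. Setting $Z_1=X_1$, $Z_2=X_2$, $Z_3=\lambda_3 X_3$, and $Z_4=\lambda_3\lambda_4 X_4$, a direct check gives $[Z_1,Z_2]=Z_3$ and $[Z_2,Z_3]=Z_4$ while every remaining bracket stays zero, so $\{Z_1,Z_2,Z_3,Z_4\}$ is a Milnor frame with structure constants in $\{0,1\}$ realizing precisely the relations of $\mathfrak{h}^4$ from Example~\ref{adh4}; this furnishes the isomorphism $\mathfrak{g}\cong\mathfrak{h}^4$. The only point requiring genuine care is the index bookkeeping modulo $4$ — in particular recognizing that the pair $\{4,1\}$ is adjacent rather than antipodal — together with choosing the rescaling factors so that both nonzero constants land exactly on $1$; neither step is a serious obstacle, as the real content of the lemma is the exclusion of the antipodal configuration supplied by Proposition~\ref{Nilpotent}.
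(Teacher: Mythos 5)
Your proposal is correct and takes essentially the same route as the paper: Proposition~\ref{Nilpotent} excludes the antipodal configuration (with the correct observation that $\{4,1\}$ is cyclically adjacent), a cyclic shift places the nonzero constants at $\lambda_3,\lambda_4$, and a rescaling normalizes them to $1$. In fact your scaling $Z_3=\lambda_3X_3$, $Z_4=\lambda_3\lambda_4X_4$ is the right one --- the paper's stated choice $Y_4=\tfrac{\lambda_{k+1}}{\lambda_k}X_{k+1}$ actually yields $[Y_2,Y_3]=\lambda_k^2\,Y_4$ rather than $Y_4$, a minor slip your version avoids.
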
	
	
	\noindent Now for the proof of Theorem~\ref{splitting}.
	\begin{proof}[Proof of Theorem~\ref{splitting} \nopunct]\label{proof of splitting}
		First we want to show that there exists $i \in \{1, \ldots, n\}$ such that $\lambda_i \neq 0$ and $\lambda_{i-1} = 0$. Suppose $\lambda_i \neq 0$ for some $i$ and additionally suppose $\lambda_{i-1} \neq 0$. By Proposition~\ref{Nilpotent} $\lambda_{i-2}\lambda_i = 0$ which shows $\lambda_{i-2} = 0$. 
		
		\smallskip 
		We proceed by induction on the dimension of the Lie algebra. Assume that any Lie algebra of dimension $k < n$ with a Milnor frame splits as a direct sum of ideals as stated in the theorem. Let $\{X_1, \ldots, X_n\}$, $n \geq 4$, be a Milnor frame for the nonabelian Lie algebra $\mathfrak{g}$. Noting that the operator $S_\ell: \mathfrak{g} \to \mathfrak{g}$, defined by $c^iX_i \mapsto c^i X_{i+\ell}$, takes Milnor frames to Milnor frames, by our previous comments we may suppose without loss of generality that $\lambda_3 \neq 0$ and $\lambda_2 = 0$. We consider the cases where $\lambda_4 = 0$ and $\lambda_4 \neq 0$. 
		
		\smallskip
		Suppose $\lambda_4 = 0$. For each $i$, $$
		\ad_{X_i}(\mathfrak{g}) = \Span\{\lambda_{i+1}X_{i+1}, \lambda_{i+2}X_{i+2}\} \subset \Span\{X_{i+1}, X_{i+2}\}.
		$$
		Thus, in order to determine if $\{X_1, X_2,X_3\}$ is an ideal we need to show that $\ad_{X_3} = 0$. By our assumption,  $\lambda_3 \neq 0$ so that by Proposition~\ref{Nilpotent}, $\lambda_3\lambda_5 = 0$ which implies $\lambda_5 = 0$. Given that $\lambda_4 = 0$ this shows $\ad_{X_3} = 0$. By Lemma~\ref{scaleh3} and the fact that $\lambda_1,\lambda_2 = 0$, $\Span\{X_1,X_2,X_3\} \cong \mathfrak{h}^3$. To show that $\Span\{X_4, \ldots, X_n\}$ forms an ideal we must show $[X_3,X_4] = 0$ and $\ad_{X_n} = 0$. We know $\ad_{X_3} = 0$ so that $[X_3,X_4] = 0$. By assumption, $\lambda_2 = 0$ and by Proposition~\ref{Nilpotent} $\lambda_3\lambda_1 = 0$ which implies $\lambda_1 = 0$. Thus $\ad_{X_n} = 0$ and so $\Span\{X_4, \ldots, X_n\}$ forms an ideal.
		
		\smallskip
		Now suppose that $\lambda_4 \neq 0$. We show that $\Span\{X_1,\ldots, X_4\}$ and $\Span\{X_5, \ldots, X_n\}$ are ideals. Similar to the previous argument, we show that $\ad_{X_4} = 0$ and $\ad_{X_n} = 0$. Due to $\lambda_3,\lambda_4 \neq 0$ and Proposition~\ref{Nilpotent} we can determine that $\lambda_5, \lambda_6 = 0$ which shows $\ad_{X_4}= 0$. Because $\lambda_1,\lambda_2 = 0$, by Lemma~\ref{scaleh4}, $\Span\{X_1, \ldots, X_4\} \cong \mathfrak{h}^4$. Now we show that $\ad_{X_n} =0$. By assumption $\lambda_2 = 0$. We know $\lambda_1 = 0$ by Proposition~\ref{Nilpotent} and the fact that $\lambda_3 \neq 0$ so that $\ad_{X_n}= 0$. This shows that $\Span\{X_1, \ldots, X_4\}$ and $\Span\{X_5, \ldots, X_n\}$ form ideals.
		
		\medskip
		Thus $\mathfrak{g}$ is isomorphic to $\mathfrak{h}^3 \oplus \mathfrak{h}$ or $\mathfrak{h}^4 \oplus \mathfrak{h}$ where $\mathfrak{h}$ is an ideal of $\mathfrak{g}$. To complete the proof we must show that $\mathfrak{h}$ is an ideal with a Milnor frame. Let $m \in \{3,4\}$ so that $\mathfrak{h} = \Span\{X_{m+1}, \ldots, X_n\}$. We claim that if $n-m \leq 2$ then $\mathfrak{h}$ must be abelian. If $n - m = 2$ then 
		$$
		\ad_{X_{m+1}} = \lambda_{m+3}E_{(m+3)(m+2)} - \lambda_{m+2}E_{(m+2)m} = \lambda_1 E_{1m} = 0
		$$
		where the last two equalities follow from $m+3 \mod n = n + 1 \mod n = 1 \mod n$, Proposition~\ref{Nilpotent}, and $\lambda_3,\lambda_m \neq 0$. Furthermore, we have already shown that in both cases $\ad_{X_{m+2}} = \ad_{X_n} = 0$ so that $\mathfrak{h}$ is abelian. If $n - m = 1$ then $\mathfrak{h}$ is a one dimensional ideal of a Lie algebra and is thus abelian. 
		Suppose that $n - m > 2$. For each $m+1 \leq j \leq n-2$ $\ad_{X_j} = \lambda_{j+2}E_{(j+2)(j+1)} - \lambda_{j+1}E_{(j+1)(j-1)}$. By previous observations, $\ad_{X_n} = 0$. Thus we have that $[X_{n-1}, X_n] = 0\cdot X_m$ and $[X_n, X_m] = 0\cdot X_{m+1}$ so that $\mathfrak{h}$ admits a Milnor frame. By induction, $\mathfrak{h}$ splits into a sum of ideals as stated in Theorem~\ref{splitting}.
	\end{proof}
	\section{Geometric Properties of Milnor Frames}\label{Geometric Properties}
	Let $(\mathfrak{g},g)$ be a metric Lie algebra with a Milnor frame. By Theorem~\ref{splitting} we know that $\mathfrak{g} \cong (\oplus \mathfrak{h}^3) \oplus (\oplus \mathfrak{h}^4) \oplus \mathfrak{a}$. Let us assume that such a decomposition of $\mathfrak{g}$ is pair-wise orthogonal. Each $\mathfrak{h}^3$ is a 3-dimensional unimodular metric Lie algebra, and so by Lemma 4.1 of~\cite{MR425012} there exists an orthonormal Milnor frame of $\mathfrak{h}^3$. Thus in order for $\mathfrak{g}$ to have an orthonormal Milnor frame, the subalgebras in the decomposition of $\mathfrak{g}$ isomorphic to $\mathfrak{h}^4$ must have an orthonormal Milnor frame. We now determine precisely when $\mathfrak{h}^4$ admits an orthonormal Milnor frame.

	\subsection{Metrics on $\mathfrak{h}^4$}
	Consider the 4-dimensional Lie algebra $A_{4,1}$ as seen on page 246 of~\cite{MR2655935} whose bracket relations are shown below:
	$$
	[X_2,X_4] = X_1, \quad [X_3,X_4] = X_2.
	$$
	By setting $Y_1 = X_2 + X_3$, $Y_2 = X_4$, $Y_3 = X_1 + X_2$, $Y_4 = -X_1$ we obtain $ A_{4,1} \cong \mathfrak{h}^4$ as shown in Figure~\ref{a41}.

	\medskip
	\begin{figure}[ht]
		\begin{center}
			\begin{tabular}{|c|c|c|c|c|}
				\hline
				& $Y_1$ & $Y_2$& $Y_3$ & $Y_4$ \\
				\hline
				$Y_1$ & 0 & $X_1 + X_2$ & 0 & 0 \\
				\hline
				$Y_2$ & $-X_1 - X_2$ &0& $-X_1$ & 0\\
				\hline
				$Y_3$ & $0$ & $X_1$ & 0 & 0 \\
				\hline 
				$Y_4$ & 0 & 0 & 0 & 0 \\
				\hline
			\end{tabular}
		\end{center}
		\caption{Entry $i,j$ as $[Y_i,Y_j]$}
		\label{a41}
	\end{figure}
	
	\noindent Thus in Lemma 3 of~\cite{MR2655935}, there exists an orthonormal frame $\{F_1, \ldots, F_4\}$ of $\mathfrak{h}^4$ for which there are at most three nontrivial structure constants $C_{2,4}^1,C_{3,4}^2 > 0$, $C_{3,4}^1 \in \bb R$. We now show that $\mathfrak{h}^4$ has an orthonormal Milnor frame if and only if $C_{3,4}^1 = 0$. 
	\begin{theorem}{\label{orthonormalh4}}
		Let $\{F_1, F_2, F_3,F_4\}$ be an orthonormal frame of the metric Lie algebra $\mathfrak{g}$ generated by the following Lie bracket relations: $a = C_{2,4}^1> 0$, $b = C_{3,4}^1 \in \bb R$, and $c= C_{3,4}^2 > 0$. Let $O(4)$ be the orthogonal group of $4\times 4$ matrices with real coefficients. There exists $T \in O(4)$ which makes $\{TF_1,TF_2,TF_3,TF_4\}$ into a Milnor frame with 2 nontrivial structure constants if and only if $b = 0$. 
	\end{theorem}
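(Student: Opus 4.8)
The plan is to read off the canonical invariant subspaces of $\mathfrak{g}\cong\mathfrak{h}^4$ from the given frame and then show that any orthonormal Milnor frame is forced to align with them, so that the coefficient $b$ emerges as the unique obstruction. First I would record the invariants. Since $a=C_{2,4}^1\neq 0$ and $c=C_{3,4}^2\neq 0$, a direct computation from $[F_2,F_4]=aF_1$ and $[F_3,F_4]=bF_1+cF_2$ gives the derived algebra $[\mathfrak{g},\mathfrak{g}]=\Span\{F_1,F_2\}$ and the center $\mathfrak{z}(\mathfrak{g})=\Span\{F_1\}$, consistent with the $3$-step nilpotent chain of Example~\ref{adh4}. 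Both subspaces are preserved by every automorphism, hence by any change of orthonormal basis that happens to be a Milnor frame.

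Next I would carry out a reduction. Because the cyclic relabeling operator $S_\ell$ is orthogonal and takes Milnor frames to Milnor frames (merely shifting the indices of the structure constants), the existence of some $T\in O(4)$ making $\{TF_i\}$ a Milnor frame with two nontrivial constants is equivalent to the existence of one whose two nonzero constants are exactly $\mu_3,\mu_4$; note these must be adjacent by Lemma~\ref{scaleh4} together with Proposition~\ref{Nilpotent}. For such a frame $G_i:=TF_i$, the relations $[G_1,G_2]=\mu_3 G_3$ and $[G_2,G_3]=\mu_4 G_4$ (all other brackets trivial) yield $\mathfrak{z}(\mathfrak{g})=\Span\{G_4\}$ and $[\mathfrak{g},\mathfrak{g}]=\Span\{G_3,G_4\}$. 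Matching these against the invariants forces $G_4=\pm F_1$ and $G_3=\pm F_2$, and then orthonormality of $\{G_i\}$ places $G_1,G_2$ in the complementary plane $\Span\{F_3,F_4\}$.

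The crux, which is where $b$ appears, is the bracket $[G_1,G_2]$. Writing $G_1=pF_3+qF_4$ and $G_2=rF_3+sF_4$, bilinearity and antisymmetry collapse everything onto a single bracket: $[G_1,G_2]=(ps-qr)[F_3,F_4]=(ps-qr)(bF_1+cF_2)$. Since the restriction of $T$ to $\Span\{F_3,F_4\}$ is orthogonal, its determinant $ps-qr$ equals $\pm1$, hence is nonzero. But the Milnor relation demands $[G_1,G_2]=\mu_3 G_3=\pm\mu_3 F_2\in\Span\{F_2\}$, so the $F_1$-component $(ps-qr)b$ must vanish; as $ps-qr\neq 0$ this forces $b=0$. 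Conversely, if $b=0$ I would exhibit the explicit signed permutation $T$ with $TF_1=F_3$, $TF_2=F_4$, $TF_3=F_2$, $TF_4=F_1$, which is orthogonal and satisfies $[TF_1,TF_2]=c\,TF_3$ and $[TF_2,TF_3]=-a\,TF_4$ with all remaining brackets zero, so $\{TF_i\}$ is a Milnor frame with precisely two nontrivial structure constants.

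I do not expect a serious obstacle here; the argument is essentially forced once the invariants are in place. The two points requiring care are the justification of the reduction to the case $\mu_3,\mu_4\neq 0$ (so that the invariant identification of $G_3,G_4$ is legitimate for an arbitrary candidate frame) and the observation that $[G_1,G_2]$ is necessarily a scalar multiple of $[F_3,F_4]$, which is exactly what isolates $b$ as the obstruction.
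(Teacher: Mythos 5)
Your proof is correct, and it reaches the same endgame as the paper's by a cleaner route. The shared skeleton: both reduce (via the cyclic relabeling and Proposition~\ref{Nilpotent}, as in Lemma~\ref{scaleh4}) to a candidate orthonormal Milnor frame whose nonzero constants sit in positions $3$ and $4$; both pin down how that frame must sit relative to $\{F_i\}$; and both finish with the identical punchline that $[G_1,G_2]$ is a nonzero multiple of $[F_3,F_4]=bF_1+cF_2$ yet must lie in $\Span\{F_2\}$, killing $b$. The difference is the middle step. The paper gets the block structure of $T$ by a matrix-entry chase: it expands each vanishing bracket $[TF_i,TF_j]=0$ into quadratic relations among the entries $T_i^p$, feeds in $[\mathfrak{g},\mathfrak{g}]\subset\Span\{F_1,F_2\}$ to get $T_3^3=T_3^4=T_4^3=T_4^4=0$, then uses the orthogonality relations of $T\in O(4)$ to force $T_1^4=0$, $T_2^4=\pm1$, $T_1^3=\pm1$, and finally $\iota=\gamma=0$. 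You replace all of that with the frame-independent invariants $\mathfrak{z}(\mathfrak{g})$ and $[\mathfrak{g},\mathfrak{g}]$: computed in the $F$-frame they are $\Span\{F_1\}$ and $\Span\{F_1,F_2\}$, computed in the $G$-frame they are $\Span\{G_4\}$ and $\Span\{G_3,G_4\}$, and orthonormality then forces $G_4=\pm F_1$, $G_3=\pm F_2$, $G_1,G_2\in\Span\{F_3,F_4\}$ at once; note you only need $G_1,G_2$ to lie in that plane, since the $2\times2$ orthogonal coordinate matrix supplies the nonvanishing determinant $ps-qr=\pm1$. This buys brevity and makes transparent why $b$ is the unique obstruction. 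Your converse is also more explicit than the paper's, which leaves the ``if'' direction folded into its last displayed equation; your signed permutation visibly gives $[TF_1,TF_2]=c\,TF_3$ and $[TF_2,TF_3]=-a\,TF_4$ with all other brackets zero.

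One phrasing to repair: the justification ``both subspaces are preserved by every automorphism, hence by any change of orthonormal basis that happens to be a Milnor frame'' is not the right reason, since $T$ need not be an automorphism of $\mathfrak{g}$. What your argument actually uses (correctly) is that $\mathfrak{z}(\mathfrak{g})$ and $[\mathfrak{g},\mathfrak{g}]$ are subspaces of the fixed Lie algebra $\mathfrak{g}$, defined independently of any basis, so computing them in the two frames yields equalities of subsets of $\mathfrak{g}$. State it that way and the step is airtight.
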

	
	\begin{proof}
		We prove the above proposition by contradiction. Let $T \in O(4)$ be a linear operator such that $\{TF_1,TF_2,TF_3,TF_4\}$ forms a Milnor frame. By Lemma~\ref{scaleh4} we may assume that the nontrivial structure constants of this Milnor frame are $\lambda_3, \lambda_4 \neq 0$ and $T \in O(4)$. For any $i,j$ such that $[TF_i,TF_j] = 0$ 
		\begin{alignat}{1}
			0 &= [TF_i,TF_j] = [T_i^pF_p,T_j^qF_q] = T_i^pT_j^q[F_p,F_q] \notag \\
			&= \left[a(T_i^2T_j^4 - T_i^4T_j^2) + b(T_i^3T_j^4 - T_i^4T_j^3)\right]F_1 + c(T_i^3T_j^4 - T_i^4T_j^3)F_2. \notag 
		\end{alignat}  
		As a result $T_i^3T_j^4 = T_i^4T_j^3$ and $T_i^2T_j^4 = T_i^4T_j^2$. By the fact that $[\mathfrak{g},\mathfrak{g}] \in \spa\{F_1,F_2\}$,
		\begin{alignat}{1}
			&[TF_1,TF_2] = \lambda_3TF_3 \implies T_3^3 = T_3^4 = 0 \notag \\
			&[TF_2,TF_3] = \lambda_4TF_4 \implies T_4^3 = T_4^4 = 0. \notag
		\end{alignat} 
		
		\medskip
		We claim that $T_1^4$ is necessarily trivial. Suppose otherwise. Because $T_3^4 = 0$ and $[TF_1,TF_3] = [TF_1,TF_4] = 0$, 
		$0 = T_1^2T_3^4 = T_1^4T_3^2 \implies T_3^2 = 0$. Similarly $0 = T_1^2T_4^4 = T_1^4T_4^2 \implies T_4^2 = 0$ so that the vectors $[T_1^4 \ldots T_4^4]^t$ and $[T_1^3, \ldots , T_4^3]^t$ are linearly dependent, a contradiction to $T \in O(4)$. 
		
		\medskip
		Because $T_j^4 = 0$ for $j = 1,3,4$ and $T \in O(4)$, $T^4_2T_2^j=\sum_{k=1}^4 T^4_kT_k^j = \delta_{4j}$. We obtain that $T^4_2 = \pm 1$ which implies $T_2^1 = T_2^2 = T_2^3 = 0$. Similarly $T_1^3T_1^j = \sum_k T_k^3T_k^j = \delta_{3j}$ implies $T_1^3 = \pm 1$ and $T_1^1 = T_1^2 = T_1^4 = 0$. The matrix representation of $T$ under the basis $(F_1,\ldots, F_4)$ is given by 
		$$
		T = 
		\begin{bmatrix}
			0 & 0 & \gamma & \epsilon \\ 
			0 & 0 & \delta & \iota \\
			\alpha & 0 & 0 & 0 \\
			0 & \beta & 0 & 0
		\end{bmatrix}
		$$
		where $\alpha,\beta \in \{-1,1\}$ and $\gamma,\delta, \epsilon, \iota \in \bb R$. Because $[TF_2,TF_3] = \lambda_4TF_4$ and $\ad_{F_1} = 0$, $\iota = 0$. In order for $0 = g(TF_3,TF_4) = \gamma\epsilon$, $\gamma = 0$. Finally $[TF_1,TF_2] = \alpha\beta[F_3,F_4] = \alpha\beta(bF_1 + cF_2) = \lambda_3TF_3 = \lambda_3\delta F_2$. Considering that $\alpha\beta \in \{-1,1\}$, it must be the case that the linear operator $T$ turns the frame $\{F_1,F_2,F_3,F_4\}$ into an orthonormal Milnor frame if and only if $b = 0$. 
	\end{proof}
	
	As a result of Theorem~\ref{orthonormalh4} we can construct a metric $g$ so that the metric Lie algebra $(\mathfrak{h}^4, g)$ does not admit an orthonormal Milnor frame. If $(\mathfrak{h}^4,g)$ has an orthonormal Milnor frame $\{X_1,\ldots, X_4\}$, then there exists $T \in O(4)$ such that $\{TX_1, \ldots, TX_4\} = \{F_1, \ldots, F_4\}$ where $\{F_1, \ldots, F_4\}$ is as in~(\ref{orthonormalh4}). Thus $\{T^{-1}F_1, \ldots, T^{-1}F_4\}$ is an orthonormal Milnor frame which implies that $g([F_3,F_4],F_1) = 0$. An example of a metric $g$ such that $g([F_3,F_4],F_1) \neq 0$ is found in Lemma 3 of~\cite{MR2655935}. 
	
	\subsubsection{Metrics on $\mathfrak{h}^3 \oplus \mathfrak{h}^3$}
	Now we consider the collection of metric Lie algebras $\mathfrak{g}$ which contain an isomorphic copy of $\mathfrak{h}^3 \oplus \mathfrak{h}^3$. If we are able to provide a metric that does not allow $\mathfrak{h}^3 \oplus \mathfrak{h}^3$ to have an orthogonal Milnor frame, then we obtain Theorem~\ref{onlyone} as a result. First we prove the following result from linear algebra.

	\begin{lemma}\label{linlemma}
		Let $A,B \in M_2(\bb R)$. Denote the $i^{th}$ column of an arbitrary matrix $C \in M_2(\bb R)$ as $C_i$. If $A$ and $B$ have the property $A_i$ and $B_j$ are linearly dependent for any $i,j \in \{1,2\}$ then either $A$ or $B$ has determinant $0$. 
	\end{lemma}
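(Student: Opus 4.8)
The plan is to argue by contradiction: suppose both $\det A \neq 0$ and $\det B \neq 0$ and derive a contradiction from the pairwise linear-dependence hypothesis. The key observation is that a single fixed nonzero column of $A$ pins down the direction of \emph{every} column of $B$, forcing the two columns of $B$ onto a common line.

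First I would note that $\det A \neq 0$ forces the columns $A_1, A_2$ to be linearly independent, and in particular both nonzero; likewise $\det B \neq 0$ makes $B_1, B_2$ nonzero and non-parallel. Now fix the column $A_1$, which is nonzero. By hypothesis $A_1$ is linearly dependent with $B_1$ and also with $B_2$; since all three vectors in $\bb R^2$ are nonzero, linear dependence here is precisely parallelism, so there are nonzero scalars $c_1, c_2 \in \bb R$ with $B_1 = c_1 A_1$ and $B_2 = c_2 A_1$.

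Then $B_1$ and $B_2$ are both scalar multiples of the single vector $A_1$, hence parallel to one another, so $\det B = 0$, contradicting the assumption that $\det B \neq 0$. This completes the argument. (Equivalently, one can skip the contradiction setup altogether: if some column of $A$ vanishes then $\det A = 0$ and we are done; otherwise $A_1 \neq 0$ forces $B_1 \parallel B_2$ directly, giving $\det B = 0$.)

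The only subtlety, and the place where I would be careful, is the degenerate role of the zero vector: ``linearly dependent'' is automatically satisfied whenever one of the vectors vanishes, so the clean parallelism statement is available only after the determinant hypotheses have ruled out zero columns. I do not expect any genuine obstacle beyond this bookkeeping, as the statement is an elementary fact about $\bb R^2$; the substantive content is simply the transitivity of parallelism through the fixed nonzero vector $A_1$.
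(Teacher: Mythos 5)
Your proof is correct, but it takes a genuinely different (and leaner) route than the paper. The paper assumes $A$ is non-singular and uses the \emph{full} pairwise hypothesis: each column $B_i$ is simultaneously a multiple of $A_1$ and of $A_2$, which it packages as two factorizations
$$
B = A\begin{bmatrix} c_{11} & 0 \\ 0 & c_{22} \end{bmatrix}
= A\begin{bmatrix} 0 & c_{21} \\ c_{12} & 0 \end{bmatrix},
$$
and then cancels the invertible $A$ to force all $c_{ij} = 0$; this yields the stronger conclusion that $B$ is the \emph{zero matrix} whenever $A$ is invertible, from which $\det B = 0$ trivially follows. You instead argue by contradiction using only a fragment of the hypothesis: the two dependencies of $B_1$ and $B_2$ on the single nonzero column $A_1$ already force $B_1$ and $B_2$ onto a common line, so $\det B = 0$. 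Your argument is more economical (it never touches $A_2$, avoids matrix manipulation, and makes explicit the zero-vector degeneracy in the meaning of ``linearly dependent,'' which the paper passes over silently when it writes $B_i = c_{ij}A_j$); what the paper's argument buys in exchange is the stronger structural fact $B = 0$, though that extra strength is never used in the application to Proposition~\ref{nonorthogonalh3h3}. Both are complete and valid proofs of the lemma as stated.
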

	
	\begin{proof}
		Suppose that $A$ is non-singular. By our hypothesis $B_i = c_{ij}A_j$ for some $c_{ij} \in \bb R$ and $i,j \in \{1,2\}$. Thus 
		$$
		\begin{bmatrix}A_1 & A_2\end{bmatrix}\begin{bmatrix} 0 & c_{21} \\ c_{12} & 0  \end{bmatrix} =	\begin{bmatrix}B_1 & B_2\end{bmatrix} = \begin{bmatrix}A_1 & A_2 \end{bmatrix}\begin{bmatrix}c_{11} & 0 \\ 0 & c_{22} \end{bmatrix}.
		$$
		Because $A$ is invertible, $c_{ij} = 0$ for all $i,j$ which implies $B_1 = B_2 = 0$. 
	\end{proof}
	
	\begin{proposition}{\label{nonorthogonalh3h3}}
		Let $(\mathfrak{h}^3 \oplus \mathfrak{h}^3, g)$ be a metric Lie algebra. Consider the Milnor frame $\{U_1,U_2,U_3,V_1,V_2,V_3\}$ for $(\mathfrak{h}^3 \oplus \mathfrak{h}^3)$ where $$\mathfrak{h}^3 \oplus \{0\} = \text{Span}\{U_1,U_2,U_3\} \quad \text{and} \quad \{0\} \oplus \mathfrak{h}^3 = \text{Span}\{V_1,V_2,V_3\}.$$ If there exists a Lie isomorphism $T$ with $g(T(\mathfrak{h}^3\oplus \{0\}), T(\{0\} \oplus \mathfrak{h}^3)) = 0$ then $g(U_3,V_3) = 0$.
	\end{proposition}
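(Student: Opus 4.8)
The plan is to exploit that the derived subalgebra $W:=[\mathfrak{g},\mathfrak{g}]=\Span\{U_3,V_3\}$ is both central and characteristic. Since $T$ is a Lie isomorphism, $T(W)=[T\mathfrak{g},T\mathfrak{g}]=W$, so I may write $TU_3=aU_3+bV_3$ and $TV_3=cU_3+dV_3$, with the matrix $M=\left(\begin{smallmatrix} a & c\\ b & d\end{smallmatrix}\right)$ of $T|_W$ invertible. Because $TU_3$ lies in $T(\mathfrak{h}^3\oplus\{0\})$ and $TV_3$ in $T(\{0\}\oplus\mathfrak{h}^3)$, the orthogonality hypothesis immediately gives $g(TU_3,TV_3)=0$, which expands to $ac\,g(U_3,U_3)+(ad+bc)\,g(U_3,V_3)+bd\,g(V_3,V_3)=0$. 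Thus it suffices to prove that $ac=0$, $bd=0$, and $ad+bc\neq 0$; equivalently, that $T$ acts on $W$ either diagonally or by swapping the two center lines $\bb R U_3$, $\bb R V_3$.

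To extract $ac=0$ and $bd=0$ I would encode the bracket through $2\times 2$ determinants. Since $U_3,V_3$ are central, $[x,y]$ depends only on the $\{U_1,U_2\}$- and $\{V_1,V_2\}$-parts $p_U(x),p_V(x)\in\bb R^2$, and writing the two structure constants as $[U_1,U_2]=\lambda_U U_3$, $[V_1,V_2]=\lambda_V V_3$ (both nonzero) one has
\[
[x,y]=\lambda_U\,\det\!\big[\,p_U(x)\mid p_U(y)\,\big]\,U_3+\lambda_V\,\det\!\big[\,p_V(x)\mid p_V(y)\,\big]\,V_3.
\]
Applying this to $[TU_1,TU_2]=\lambda_U TU_3$ and $[TV_1,TV_2]=\lambda_V TV_3$ identifies $a,b,c,d$, up to the nonzero factors $\lambda_U,\lambda_V$, with the four determinants $\det[p_U(TU_1)\mid p_U(TU_2)]$, $\det[p_V(TU_1)\mid p_V(TU_2)]$, $\det[p_U(TV_1)\mid p_U(TV_2)]$, $\det[p_V(TV_1)\mid p_V(TV_2)]$. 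The vanishing cross-brackets $[TU_i,TV_j]=T[U_i,V_j]=0$ then force $\det[p_U(TU_i)\mid p_U(TV_j)]=0$ and $\det[p_V(TU_i)\mid p_V(TV_j)]=0$ for all $i,j$, i.e. each $U$-plane column $p_U(TU_i)$ is dependent on each $p_U(TV_j)$, and similarly in the $V$-plane. These are exactly the hypotheses of Lemma~\ref{linlemma}: taking $A=[p_U(TU_1)\mid p_U(TU_2)]$, $B=[p_U(TV_1)\mid p_U(TV_2)]$ yields $\det A=0$ or $\det B=0$, hence $a=0$ or $c=0$, so $ac=0$; the $V$-plane version gives $bd=0$.

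Finally, with $ac=0$ and $bd=0$, invertibility of $M$ (that is $ad-bc\neq 0$) rules out the degenerate combinations $\{a=b=0\}$ and $\{c=d=0\}$, leaving only the diagonal case ($b=c=0$, $a,d\neq 0$) or the anti-diagonal case ($a=d=0$, $b,c\neq 0$); in both $ad+bc\neq 0$. The expansion of $g(TU_3,TV_3)=0$ then collapses to $(ad+bc)\,g(U_3,V_3)=0$, forcing $g(U_3,V_3)=0$, as desired. The conceptual heart, and the step I expect to be the main obstacle, is establishing the rigidity that $T$ acts on the derived algebra $W$ either diagonally or by swapping the two center lines; Lemma~\ref{linlemma} is precisely the linear-algebra input that turns the vanishing cross-brackets into this dichotomy, and the only delicate bookkeeping is matching the four determinants to $a,b,c,d$ through the nonzero constants $\lambda_U,\lambda_V$.
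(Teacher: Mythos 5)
Your proof is correct, and at its core it runs on the same engine as the paper's: the bracket-as-determinant encoding on the two planes $\Span\{U_1,U_2\}$ and $\Span\{V_1,V_2\}$, and Lemma~\ref{linlemma} applied to the vanishing cross-brackets $[TU_i,TV_j]=0$ to force a singular/nonsingular dichotomy. Where you genuinely diverge is in the organization, and your version is leaner. The paper manipulates the full $6\times 6$ matrix of $T$: it derives the block form with $2\times 2$ blocks $W,X,Y,Z$, argues by cases according to which blocks are singular (reducing the anti-diagonal case to the diagonal one by post-composing with the factor-swapping isomorphism $S$), and finally extracts $\det(W)\det(Z)\,g(U_3,V_3)=0$ from a block computation of $T^t g T$ with $g$ written in block form. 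You instead observe that the derived subalgebra $\Span\{U_3,V_3\}$ is characteristic, so $T$ restricts to an invertible $2\times 2$ matrix $M$ there; your four determinants are, up to the nonzero scalars $\lambda_U,\lambda_V$, exactly the paper's $\det W$, $\det Y$, $\det X$, $\det Z$, so $M$ is essentially $\left[\begin{smallmatrix}\det W & \det X\\ \det Y & \det Z\end{smallmatrix}\right]$. Lemma~\ref{linlemma} gives $ac=0$ and $bd=0$, and invertibility of $M$ alone rules out the degenerate combinations, handling the diagonal and anti-diagonal cases uniformly with no swap trick; the conclusion then drops out of the direct expansion of $g(TU_3,TV_3)=0$, with no metric block algebra needed. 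What your reorganization buys is brevity and a cleaner logical skeleton (the paper's step ``if $W$ or $Z$ is singular then $T$ is singular'' is exactly your invertibility-of-$M$ observation, made rigorous for free); what the paper's longer computation buys is explicit control of the entire matrix of $T$, none of which is actually needed for the conclusion.
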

	
	\begin{proof}
		We may always scale the frames $\{U_1,U_2,U_3\}$ and $\{V_1,V_2,V_3\}$ so that the structure constants are contained in $\{0,1\}$. Without loss of generality we shall assume that the structure constants for the frame $\{U_1,\ldots, V_3\}$ are contained in $\{0,1\}$. Because $T$ is a Lie isomorphism, $T_1^iT_2^j[U_i,U_j] = [TU_1,TU_2] = TU_3 = T^k_3U_k$. This shows $T_3^3 = T_1^1T_2^2 - T_1^2T_2^1$, $T_3^6 = T_1^4T_2^5- T_1^5T_2^4$, and $T_3^k = 0$ for $k = 1,2,4,5$. Similarly $T_6^3 = T_4^1T_5^2-T_4^2T_5^1$, $T_6^6 = T_4^4T_5^5 - T_4^5-T_5^4$ and $T_6^k = 0$ for $k=1,2,4,5$. For $i,j$ such that $[P_i,P_j] = 0$, $P_i,P_j \in \{U_1, \ldots, V_3\}$, $T_i^1T_j^2 - T_i^2T_j^1 = T_i^4T_j^5 - T_i^5T_j^4 = 0$. The matrix representation of $T$ can be given by the block matrix 
		$$
		T:= \begin{bmatrix} W & 0 & X & 0 \\ w^t & \det(W) & x^t & \det(X) \\ Y & 0 & Z & 0 \\y^t & \det(Y)  & z^t & \det(Z) \end{bmatrix}
		$$
		Where $W,X,Y$ and $Z$ are $2\times 2$ matrices, $w,x,y,z$ are $2\times 1$ matrices, and each $0$ is a $2 \times 1$ matrix. Letting $C_i$ represent the $i^{th}$ column of a matrix $C \in M_2(\bb R)$, we have the additional property that $W_i$ and $X_j$ are linearly dependent for any $i,j$ and similarly $Y_i$ and $Z_j$ are linearly dependent. Suppose that $X$ is singular. If $W$ or $Z$ is singular then $T$ will be singular leading us to a contradiction. Let $W$ and $Z$ be nonsingular. By Lemma~\ref{linlemma} $Y$ is singular.  $T$ can be represented as 
		$$
		T = \begin{bmatrix} \omega & \chi \\ \gamma & \zeta \end{bmatrix}
		$$
		where 
		\begin{alignat}{2}
			&\omega = \begin{bmatrix} W & 0 \\ w^t & \det(W) \end{bmatrix} \hspace{2cm} &&\chi = \begin{bmatrix} X & 0 \\ x^t & 0\end{bmatrix} \notag \\
			&\gamma = \begin{bmatrix} Y & 0 \\ y^t & 0 \end{bmatrix} && \zeta = \begin{bmatrix} Z & 0 \\ z^t & \det(Z) \end{bmatrix}. \notag
		\end{alignat}
		Letting $g$ be represented as 
		$$
		g = \begin{bmatrix}A & C \\ C^t & B \end{bmatrix}
		$$
		\begin{alignat}{1}
			&\begin{bmatrix} \omega^t & \gamma^t \\ \chi^t & \zeta^t\end{bmatrix}\begin{bmatrix}A & C \\ C^t & B\end{bmatrix}\begin{bmatrix}\omega & \chi \\ \gamma & \zeta\end{bmatrix} = \begin{bmatrix}\omega^t A + \gamma^t C^t & \omega^t C + \gamma^t B \\ \chi^t A + \zeta^t C^t & \chi^t C + \zeta^t B\end{bmatrix}\begin{bmatrix}\omega & \chi \\ \gamma & \zeta \end{bmatrix} \notag \\
			&= \begin{bmatrix}\omega^tA \omega + \gamma^tC^t \omega + \omega^t C\gamma + \gamma^t B \gamma & \omega^t A \chi + \gamma^tC^t \chi + \omega^tC\zeta + \gamma^t B \zeta \\ \chi^tA\omega+ \zeta^tC^t\omega + \chi^t C \gamma + \zeta^t B \gamma & \chi^tA\chi + \zeta^tC^t \chi+ \chi^tC\zeta + \zeta^t B \zeta \end{bmatrix}
		\end{alignat}
		so that 
		\begin{equation}
			\omega^tA\chi + \gamma^tC^t \chi + \omega^tC\zeta + \gamma^t B \zeta = 0.
		\end{equation}
		Finally we obtain 
		\begin{alignat}{1}
			0 &= (\omega U_3)^tA (\chi V_3) + (\gamma U_3)^t C^t (\chi V_3) + (\omega U_3)C(\zeta V_3) + (\gamma U_3)^t B (\zeta V_3) \notag \\
			&= \det(W)\det(Z)(U_3)^t C V_3 = \det(W)\det(Z)g(U_3,V_3) 
		\end{alignat}
		where $\det(W)\det(Z) \neq 0$ so that $g(U_3,V_3) = 0$. 
		
		If $X$ is non-singular then by Lemma~\ref{linlemma} $W$ and $Z$ must be singular. In order for $T$ to remain non-singular $Y$ must be nonsingular. If we post-compose $T$ with the Lie isomorphism $S = \begin{bmatrix}0 & I \\ I & 0\end{bmatrix}$ then a similar argument of the case where $X$ is singular holds to show that $g(U_3,V_3) = 0$. 
	\end{proof}
	Taking the contrapositive of the previous theorem states if $g(U_3,V_3) \neq 0$, then there are no linear operators $T \in \text{Aut}(\mathfrak{g})$ such that $g(TU_i,TV_j) = 0$ $\forall i,j$. 
	
	\medskip
	\begin{proposition}{\label{orthonormalh3h3}}
		There exists a metric $g$ on the Lie algebra $(\mathfrak{h}^3 \oplus \mathfrak{h}^3)$ such that $(\mathfrak{h}^3 \oplus \mathfrak{h}^3, g)$ does not admit an orthonormal Milnor frame. 
	\end{proposition}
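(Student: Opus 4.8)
The plan is to obtain Proposition~\ref{orthonormalh3h3} as a direct application of the contrapositive of Proposition~\ref{nonorthogonalh3h3}. First I would fix the standard Milnor frame $\{U_1,U_2,U_3,V_1,V_2,V_3\}$ with $\mathfrak{h}^3\oplus\{0\}=\Span\{U_1,U_2,U_3\}$ and $\{0\}\oplus\mathfrak{h}^3=\Span\{V_1,V_2,V_3\}$, and simply \emph{choose} a metric $g$ whose Gram matrix in this basis is the identity except for the single off-diagonal pair $g(U_3,V_3)=\tfrac12$. The only thing to verify here is positive-definiteness, which is immediate: the Gram matrix differs from $I_6$ only in the $2\times 2$ block indexed by $\{U_3,V_3\}$, and that block $\left(\begin{smallmatrix}1&1/2\\1/2&1\end{smallmatrix}\right)$ is positive definite. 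By construction $g(U_3,V_3)=\tfrac12\neq 0$.

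Next I would argue by contradiction: suppose $(\mathfrak{h}^3\oplus\mathfrak{h}^3,g)$ admits an orthonormal Milnor frame $\{W_1,\dots,W_6\}$. Applying Theorem~\ref{splitting} to this frame produces a decomposition of $\mathfrak{g}$ into ideals, each isomorphic to $\mathfrak{h}^3$, $\mathfrak{h}^4$, or abelian, with each ideal spanned by a block of consecutive frame vectors (after the cyclic relabeling $S_\ell$ used in that proof, which merely permutes the frame and hence preserves orthonormality). Since $\mathfrak{h}^3\oplus\mathfrak{h}^3$ is $2$-step nilpotent it has no $3$-step factor $\mathfrak{h}^4$, and since $\dim[\mathfrak{g},\mathfrak{g}]=2$ a $3$-dimensional abelian summand (which would force $\dim[\mathfrak{g},\mathfrak{g}]\le 1$) is excluded; hence the decomposition is exactly $\mathfrak{g}=I_1\oplus I_2$ with $I_1=\Span\{W_1,W_2,W_3\}$, $I_2=\Span\{W_4,W_5,W_6\}$, and $I_1\cong I_2\cong\mathfrak{h}^3$. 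Because the frame is orthonormal, every $W_i$ with $i\le 3$ is orthogonal to every $W_j$ with $j\ge 4$, so $I_1\perp I_2$.

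I would then manufacture an automorphism witnessing the hypothesis of Proposition~\ref{nonorthogonalh3h3}. Choosing Lie-algebra isomorphisms $\phi_1\colon\mathfrak{h}^3\oplus\{0\}\to I_1$ and $\phi_2\colon\{0\}\oplus\mathfrak{h}^3\to I_2$ (which exist since all four summands are abstractly $\mathfrak{h}^3$), their sum $T=\phi_1\oplus\phi_2$ is a bijective linear map; it is bracket-preserving because in both the domain and the target the two $3$-dimensional pieces are commuting ideals, so every cross bracket vanishes on each side and $T$ respects the brackets internal to each piece. Thus $T\in\mathrm{Aut}(\mathfrak{g})$ with $T(\mathfrak{h}^3\oplus\{0\})=I_1$ and $T(\{0\}\oplus\mathfrak{h}^3)=I_2$, whence $g\big(T(\mathfrak{h}^3\oplus\{0\}),T(\{0\}\oplus\mathfrak{h}^3)\big)=g(I_1,I_2)=0$. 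Proposition~\ref{nonorthogonalh3h3} then forces $g(U_3,V_3)=0$, contradicting the choice $g(U_3,V_3)=\tfrac12$. Therefore no orthonormal Milnor frame exists.

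The routine parts are the positive-definiteness check and the verification that $T$ preserves brackets. The step I expect to require the most care is the passage from the abstract ideal decomposition of Theorem~\ref{splitting} to the concrete statement that the orthonormal Milnor frame splits $\mathfrak{g}$ into \emph{two mutually orthogonal} copies of $\mathfrak{h}^3$ aligned with disjoint blocks of frame vectors; one must confirm that the cyclic relabeling $S_\ell$ does not disturb orthonormality and that the dimension-and-nilpotency bookkeeping genuinely rules out both an $\mathfrak{h}^4$ factor and an abelian factor.
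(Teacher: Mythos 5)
Your proof is correct, and its core coincides with the paper's: both arguments ultimately rest on Proposition~\ref{nonorthogonalh3h3} applied to a metric with $g(U_3,V_3)\neq 0$ (your choice $g = I + \tfrac{1}{2}(E_{36}+E_{63})$ is exactly the paper's $I+\epsilon(E_{36}+E_{63})$ with $\epsilon = \tfrac{1}{2}$). The surrounding logic, however, is organized differently. The paper first supposes an operator $T$ producing an orthonormal Milnor frame and then splits into cases according to where the two nontrivial structure constants sit: distance two is killed by Proposition~\ref{Nilpotent}; adjacent constants are said to yield $\mathfrak{h}^4 \subset \mathfrak{h}^3\oplus\mathfrak{h}^3$ and are disposed of by invoking Theorem~\ref{orthonormalh4} to choose a metric bad for that copy of $\mathfrak{h}^4$; antipodal constants give $T \in \text{Aut}(\mathfrak{g})$, and Proposition~\ref{nonorthogonalh3h3} finishes. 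You instead fix the metric once and for all (the correct quantifier order for an existence statement about $g$) and eliminate the adjacent case structurally: $\mathfrak{h}^4$ is 3-step nilpotent while $\mathfrak{h}^3\oplus\mathfrak{h}^3$ is 2-step, so that case is vacuous, and your derived-algebra count rules out abelian summands; this makes Theorem~\ref{orthonormalh4} unnecessary and avoids the paper's per-case re-selection of the metric. You also verify explicitly, via $T = \phi_1\oplus\phi_2$ on commuting ideals, that the hypothesis of Proposition~\ref{nonorthogonalh3h3} is met --- a step the paper asserts in one clause (``then $T\in\text{Aut}(\mathfrak{g})$'') without construction. What the paper's version buys is brevity; what yours buys is a single fixed metric, a cleaner contradiction structure, and an honest discharge of the automorphism hypothesis, at the cost of leaning on the block form of the decomposition from the \emph{proof} (not just the statement) of Theorem~\ref{splitting}, which you correctly flag as the delicate point.
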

	
	\begin{proof}
		Let be $(\mathfrak{g} = \mathfrak{h}^3 \oplus \mathfrak{h}^3,g)$ be a metric Lie algebra with a Milnor frame $\{X_1, \ldots, X_6\}$ and $T: \mathfrak{g} \to \mathfrak{g}$ be a linear operator such that $\{TX_1, \ldots, TX_6\}$ forms a Milnor frame with two nontrivial structure constants. Denote these structure constants as $\lambda_1, \ldots, \lambda_6 \in \{0,1\}$. Then either $\lambda_i = \lambda_{i+1} = 1$, $\lambda_i = \lambda_{i+2} = 1$, or $\lambda_i = \lambda_{i+3} =1 $ for some $i \in \{1,\ldots, 6\}$. It cannot be the case that $\lambda_i = \lambda_{i+2} = 1$ for some $i$ by Proposition~\ref{Nilpotent} which leaves us with two cases. 
		
		\medskip 
		If there is some $i$ such that $\lambda_i = \lambda_{i+1}$, then $\mathfrak{h}^4 \subset \mathfrak{h}^3 \oplus \mathfrak{h}^3$. By Proposition~\ref{orthonormalh4}, we may choose a metric $g$ so that $(\mathfrak{h}^4, g\vert_{\mathfrak{h}^4})$ does not admit an orthonormal Milnor frame and so $(\mathfrak{h}^3 \oplus \mathfrak{h}^3, g)$ does not admit an orthonormal Milnor frame.
		
		\medskip 
		If $\lambda_i = \lambda_{i+3} = 1$ for some $i$, then $T \in \text{Aut}(\mathfrak{g})$. Choose a metric $g$ which makes $g(X_3,X_6) \neq 0$. For example if $g = I + \epsilon(E_{36} + E_{63})$ for some sufficiently small $\epsilon > 0$ that allows $g$ to be positive definite then $g(X_3,X_6) = \epsilon > 0$. By Proposition~\ref{nonorthogonalh3h3}, there is no linear operator $T$ which makes $T(\mathfrak{h}^3 \oplus \{0\})$ the orthogonal complement of $T(\{0\}\oplus \mathfrak{h}^3)$. Thus $\mathfrak{h}^3 \oplus \mathfrak{h}^3$ does not necessarily admit an orthonormal Milnor frame. 
	\end{proof}
	
	Now we prove Theorem~\ref{onlyone}. 
	\begin{proof}
		Let $\mathfrak{g}$ be a Lie algebra with a Milnor frame. Suppose that any metric $g$ admits an orthonormal Milnor frame. Then $\mathfrak{h}^4$ cannot be a subalgebra of $\mathfrak{g}$ and $\mathfrak{h}^3 \oplus \mathfrak{h}^3$ cannot be a subalgebra of $\mathfrak{g}$. By Theorem~\ref{splitting}, $\mathfrak{g} = \mathfrak{h}^3 \oplus \mathfrak{a}$. 
	\end{proof}
	
	\section{Orthonormal Milnor Frames and Ricci Solitons}\label{Orthonormal Milnor Frames}
	In this section we present further results about metric Lie algebras which admit orthonormal Milnor frames and Ricci soliton metrics. In particular, we show that Ricci solitons on $\mathfrak{h}^4$ are contained in the collection of metrics $g$ on $\mathfrak{h}^4$ which admit an orthonormal Milnor frame. Recall, because of Theorem~\ref{orthonormalh4}, we can find a metric $g$ such that $(\mathfrak{h}^4,g)$ does not admit an orthonormal Milnor Frame. 
	\subsection{Ricci Tensors}
	Suppose we have a metric Lie algebra $(\mathfrak{g},g)$ with an orthonormal Milnor frame. By Theorem~\ref{splitting} the Lie algebra $\mathfrak{g}$ is isomorphic to $(\oplus \mathfrak{h}^3) \oplus(\oplus \mathfrak{h}^4) \oplus \mathfrak{a}$ where $\mathfrak{a}$ is an abelian Lie algebra. Due to $\mathfrak{g}$ admitting an orthonormal Milnor frame, the metric $g$ can be represented as 
	$$
	g = (\oplus g\vert_{\mathfrak{h}^3}) \oplus (\oplus g\vert_{\mathfrak{h}^4}) \oplus (\oplus g\vert_{\mathfrak{a}})
	$$
	and so
	$$
	\ric_g \vert_{\mathfrak{g}} = (\oplus \ric_g\vert_{\mathfrak{h}^4}) \oplus (\oplus \ric_g \vert_{\mathfrak{h}^4}) \oplus (\ric_g \vert_{\mathfrak{a}}).
	$$
	With this in mind, to determine information about the Ricci tensor for $(\mathfrak{g},g)$, it suffces to determine information about the Ricci tensors for $(\mathfrak{h}^3, g\vert_{\mathfrak{h}^3})$ and $(\mathfrak{h}^4, g\vert_{\mathfrak{h}^4})$. \\ 
	
	Let $(\mathfrak{h}^3, g)$ have an orthonormal Milnor frame whose nontrivial structure constant is $\lambda_3$. A result of~\cite[pg. 305]{MR425012} shows that 
	\begin{equation}
		\ric_g\vert_{\mathfrak{h}^3} = \frac{\lambda_3^2}{2}\begin{bmatrix}-1 & 0 & 0 \\ 0 & -1& 0 \\ 0 & 0 & 1 \end{bmatrix}. \label{Ric3matrix}
	\end{equation}
	Let $\{X_1, \ldots, X_4\}$ be an orthonormal Milnor frame for $\mathfrak{h}^4$ with nontrivial structure constants $\lambda_3$ and $\lambda_4$. The sectional curvatures can be computed using the formula found in Lemma 1.1 of~\cite{MR425012}. The sectional curvature between any two elements of the Milnor frame can be found in Figure~\ref{SCTable}. 
	\begin{figure}[ht]
		\begin{center}
			\begin{tabular}{|c|c|c|c|c|}
				\hline
				& $X_1$ & $X_2$&$X_3$ & $X_4$ \\
				\hline
				$X_1$ & 0 & $-\frac{3}{4}\lambda_3^2$ & $\frac{1}{4}\lambda_3^2$ & 0\\
				\hline 
				$X_2$ & $-\frac{3}{4}\lambda_3^2$ & 0 & $-\frac{3}{4}\lambda_4^2 + \frac{1}{4}\lambda_3^2$ & $\frac{1}{4}\lambda_4^2$\\
				\hline 
				$X_3$ & $\frac{1}{4}\lambda_3^2$ & $-\frac{3}{4} \lambda_4^2 + \frac{1}{4}\lambda_3^2$ & 0 & $\frac{1}{4}\lambda_4^2$ \\
				\hline
				$X_4$ &$0$ &$\frac{1}{4}\lambda_4^2$ & $\frac{1}{4}\lambda_4^2$ & 0 \\
				\hline
			\end{tabular}
		\end{center}
		\caption{$\kappa(X_i,X_j)$ := $i,j$\textsuperscript{th} entry.}
		\label{SCTable}
	\end{figure} \\
	
	\vspace{.5cm}
	\noindent By Theorem 1.1, Lauret and Will~\cite[pg. 3652]{MR3080187} $\ric_g(X_i,X_j) = 0$ for $i \neq j$.  By summing each row of Figure~\ref{SCTable} we obtain the matrix representation of the Ricci tensor: 
	\begin{equation}{\label{Ric4matrix}}
		\ric_g\vert_{\mathfrak{h}^4} = 
		\frac{1}{2}\begin{bmatrix}
			-\lambda_3^2 & 0 & 0 & 0 \\
			0 & -\lambda_3^2 - \lambda_4^2 & 0 & 0 \\
			0 & 0 & \lambda_3^2 -\lambda_4^2& 0 \\
			0 & 0  & 0 & \lambda_4^2 
		\end{bmatrix}.
	\end{equation}

	\medskip
	The eigenvalues of the Ricci signatures for $\ric_g\vert_{\mathfrak{h}^3}$ and $\ric_g\vert_{\mathfrak{h}^4}$ admit positive and negative terms which coincides with Theorem 2.4 of~\cite[pg. 301]{MR425012}. An immediate consequence of this theorem is that any non-commutative nilpotent metric Lie algebra does not admit an Einstein metric, a metric $g$ for which $\text{Ric}_g = \lambda g$ for some $\lambda \in \bb R$. A known extension of Einstein metrics are metrics which satisfy the \textit{Ricci soliton equation}. For a Riemannian manifold $(M,g)$, the metric $g$ is a Ricci soliton if $g$ satisfies the equation
	\begin{equation}
		-2\ric_g = \lambda g + \mathcal{L}_X g, \quad \lambda \in \bb R \label{ricsol}
	\end{equation}
	where $X$ is a vector field and $\mathcal{L}_X$ is the Lie derivative in the direction of $X$. 
	\subsection{Ricci-Soliton Equation and Derivations}{\label{Ricci-Soliton Equation and Derivations}}
	Given a metric Lie algebra $(\mathfrak{g},g)$ and an orthonormal frame $\{X_1, \ldots, X_n\}$, the Ricci tensor $\ric_g$ has a matrix representation with respect to the frame, $\begin{bmatrix} \ric_g(X_i,X_j) \end{bmatrix}_{n \times n}$, as shown in examples~\ref{Ric3matrix} and~\ref{Ric4matrix}. Let $(\mathfrak{g}, g)$ be a nilpotent metric Lie algebra. Whenever a metric $g$ admits a Ricci tensor of the form $\ric_g \in \bb R I + \text{Der}(\mathfrak{g}) \in M_{n\times n}(\bb R)$, where $\text{Der}(\mathfrak{g})$ is the set of derivations on $\mathfrak{g}$, we say that $g$ is a Ricci nilsoliton. By Corollary 2 of~\cite{MR3268781} and Theorem 1.1 of~\cite{MR3263520}, $g$ is a Ricci nilsoliton if and only if $g$ is a Ricci soliton.
	\noindent For example the Ricci tensor~\ref{Ric3matrix} can be represented as 
	\begin{equation}
		\frac{\lambda_3^2}{2}\begin{bmatrix}-1 &  0& 0 \\ 0 & -1 & 0 \\ 0 & 0 & 1 \end{bmatrix} = -\frac{3\lambda_3^2}{2}I + \frac{\lambda_3^2}{2}\begin{bmatrix}2 & 0 & 0 \\ 0 & 2 &  0 \\ 0 & 0 & 4 \end{bmatrix} = \frac{\lambda_3^2}{2}\left[-3I + D\right]. \label{RicSolh3matrix}
	\end{equation}
	
	\noindent where $D$ is a derivation. The vector fields which satisfy~(\ref{ricsol}) for $(\mathfrak{h}^3, g)$ and $(\mathfrak{h}^4,g')$ cannot be left invariant~\cite[pg. 231]{MR3263424}. By Theorem~\ref{splitting} if $(\mathfrak{g},g)$ is a metric Lie algebra with an orthonormal Milnor frame, then $(\mathfrak{g},g)$ is a Ricci nilsoliton where the vector field which satisfies equation~(\ref{ricsol}) is not left-invariant.
	
	\begin{example}{\label{RSh3}}
		\normalfont Let $(\mathfrak{h}^3,g)$ be a metric Lie algebra with an orthonormal Milnor frame whose nontrivial structure constant is $\lambda_3 \neq 0$. By Theorem 4.3 of~\cite{MR425012} the matrix representation for the Ricci quadratic form is
		$$
		\ric_g = 
		\frac{\lambda_3^2}{2}\begin{bmatrix} 
			-1 & 0 & 0 \\
			0 & -1 & 0 \\
			0 & 0 & 1 
		\end{bmatrix}.
		$$
		The above matrix may be written as 
		$$
		\frac{\lambda_3^2}{2}\begin{bmatrix} -1 & 0 & 0 \\	0 & -1 & 0 \\	0 & 0 & 1 \end{bmatrix} = -\frac{3\lambda_3^2}{2}I + \lambda_3^2\begin{bmatrix}1 & 0 & 0 \\ 0 & 1 & 0 \\ 0 & 0 & 2 \end{bmatrix} = -\frac{3\lambda_3^2}{2}I + D
		$$
		where $D$ is a derivation. 
	\end{example}
	Suppose $(\mathfrak{h}^4, g)$ has an orthonormal Milnor frame. We shall show precisely when $g$ is a Ricci nilsoliton. 
	\begin{theorem}{\label{abs}}
		Suppose $(\mathfrak{h}^4,g)$ admits an orthonormal Milnor frame $\{X_1,\ldots, X_4\}$ with nontrivial structure constants $\lambda_3, \lambda_4 \neq 0$. Then $g$ is a Ricci nilsoliton if and only if $|\lambda_3| = |\lambda_4|$. 
	\end{theorem}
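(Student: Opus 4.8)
The plan is to work directly with the explicit Ricci tensor recorded in~(\ref{Ric4matrix}). By definition, $g$ is a Ricci nilsoliton precisely when $\ric_g \in \bb R I + \text{Der}(\mathfrak{h}^4)$, that is, when there exists $\mu \in \bb R$ for which $\ric_g - \mu I \in \text{Der}(\mathfrak{h}^4)$. The first observation I would make is that, because $\ric_g$ is diagonal in the orthonormal Milnor frame $\{X_1,\ldots,X_4\}$, the candidate operator $\ric_g - \mu I$ is also diagonal. Consequently it suffices to determine the diagonal derivations of $\mathfrak{h}^4$, and the whole problem reduces to a short linear-algebra computation.

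Next I would characterize the diagonal derivations. Writing $D = \text{diag}(d_1,d_2,d_3,d_4)$ and imposing the derivation identity $D[X_i,X_j] = [DX_i,X_j] + [X_i,DX_j]$ on the two defining brackets $[X_1,X_2] = \lambda_3X_3$ and $[X_2,X_3] = \lambda_4X_4$, I obtain (using $\lambda_3,\lambda_4 \neq 0$) the two constraints $d_3 = d_1 + d_2$ and $d_4 = d_2 + d_3$. Every remaining bracket among the $X_i$ vanishes, and for a diagonal $D$ the derivation identity on a vanishing bracket is automatically satisfied, so no further constraints arise. Thus $\text{diag}(d_1,d_2,d_3,d_4)$ is a derivation if and only if $d_3 = d_1 + d_2$ and $d_4 = d_2 + d_3$.

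Finally I would substitute $d_i = r_i - \mu$, where $r_i$ denotes the $i$th diagonal entry of $\ric_g$ from~(\ref{Ric4matrix}), into these two constraints. Each constraint solves for $\mu$, and the two resulting values of $\mu$ agree if and only if $r_1 + r_4 = 2r_3$. Computing $r_1 + r_4 = \tfrac{1}{2}(\lambda_4^2 - \lambda_3^2)$ and $2r_3 = \lambda_3^2 - \lambda_4^2$, this compatibility condition becomes $\tfrac{3}{2}(\lambda_4^2 - \lambda_3^2) = 0$, i.e.\ $\lambda_3^2 = \lambda_4^2$, i.e.\ $|\lambda_3| = |\lambda_4|$. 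For the converse direction I would exhibit the explicit witness: when $|\lambda_3| = |\lambda_4|$, taking $\mu = -\tfrac{3}{2}\lambda_3^2$ makes $\ric_g - \mu I = \text{diag}(\lambda_3^2, \tfrac{1}{2}\lambda_3^2, \tfrac{3}{2}\lambda_3^2, 2\lambda_3^2)$ a derivation, confirming the nilsoliton property.

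The computations here are entirely routine; the only point requiring care is the reduction step, namely justifying that it suffices to examine diagonal derivations. This is the mild conceptual obstacle, and it is resolved by the observation that $\ric_g - \mu I$ is forced to be diagonal, so any off-diagonal derivations of $\mathfrak{h}^4$ are irrelevant to whether $g$ is a nilsoliton.
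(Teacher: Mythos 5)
Your proof is correct and follows essentially the same route as the paper: both arguments reduce to diagonal derivations (since $\ric_g - \mu I$ is forced to be diagonal in the given orthonormal Milnor frame), impose the same two constraints $d_3 = d_1 + d_2$ and $d_4 = d_2 + d_3$ coming from the nonzero brackets, and extract $\lambda_3^2 = \lambda_4^2$ from the resulting linear system. The only difference is bookkeeping: the paper solves a $3\times 3$ system explicitly for the converse and exhibits the witness derivation separately for the forward direction, whereas you eliminate $\mu$ to obtain the single compatibility condition $r_1 + r_4 = 2r_3$, which settles both directions at once.
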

	
	\begin{proof}
		If $|\lambda_3| = |\lambda_4|$ then $\lambda_3^2 = \lambda_4^2$. The matrix representation of $\ric_g$ with respect to the frame $\{X_1, \ldots, X_4\}$ can be written as 
		\begin{alignat}{1}
			2\ric_g &= \begin{bmatrix}
				-\lambda_3^2 & 0 & 0 & 0 \\
				0 & -\lambda_3^2 - \lambda_4^2 & 0 & 0 \\
				0 & 0 & \lambda_3^2 -\lambda_4^2& 0 \\
				0 & 0  & 0 & \lambda_4^2 
			\end{bmatrix} = \begin{bmatrix} -\lambda_3^2 & 0 & 0 & 0 \\ 0 & -2\lambda_3^2 & 0 & 0 \\ 0 & 0 & 0 & 0 \\ 0 & 0 & 0 & \lambda_3^2\end{bmatrix}  \notag \\
			&= \lambda_3^2\begin{bmatrix}-1 & 0 & 0 & 0 \\ 0 & -2 & 0 & 0 \\ 0 & 0 & 0 & 0 \\ 0 & 0 & 0 & 1\end{bmatrix} \label{abs1}
		\end{alignat}
		where 
		\begin{equation}
			\begin{bmatrix}-1 & 0 & 0 & 0 \\ 0 & -2 & 0 & 0 \\ 0 & 0 & 0 & 0 \\ 0 & 0 & 0 & 1\end{bmatrix} = -3I + \begin{bmatrix}2 & 0 & 0 & 0 \\ 0 & 1 & 0 & 0 \\ 0 & 0 & 3 & 0 \\ 0 & 0 & 0 & 4 \end{bmatrix} = -3I + D \label{abs2}
		\end{equation}
		and so by~(\ref{abs1}) and~(\ref{abs2}) $2\ric_g = -3\lambda_3^2 I + \lambda_3^2D$ where $D$ is a derivation. Thus $g$ is a Ricci nilsoliton. 
		
		\medskip
		Now suppose that $\ric_g$ is a Ricci nilsoliton. Let $D$ be a derivation such that $\ric_g = cI + D$ for some constant $c \in \bb R$. Because $\ric_g$ is diagonalized with respect to the frame $\{X_1,\ldots, X_4\}$, $D$ must be diagonalized. For each $i$, let $D_i \in \bb R$ so that $DX_i = D_iX_i$. Then 
		\begin{alignat}{1}
			\lambda_3D_3X_3 &= D(\lambda_3X_3)  = D[X_1,X_2] = [DX_1, X_2] + [X_1,DX_2] \notag \\
			&= (D_1 + D_2)[X_1,X_2] = \lambda_3(D_1 + D_2)X_3
		\end{alignat}
		and so $D_3 = D_1 + D_2$. Similarly 
		$$
		\lambda_4D_4X_4 = D[X_2,X_3] = [DX_2,X_3] + [X_2,DX_3] = \lambda_4(D_2 + D_3)X_4
		$$
		and so $D_4 = D_2 + D_3 = D_1 + 2D_2$. We obtain the system of equations 
		\begin{alignat}{1}
			-\lambda_3^2 &= c + D_1 \notag \\
			-\lambda_3^2 - \lambda_4^2 &= c + D_2 \notag \\
			\lambda_3^2 - \lambda_4^2 &= c + D_1 + D_2 \notag \\
			\lambda_4^2 &= c + D_1 + 2D_2
		\end{alignat}
		which gives us the equation 
		$$
		\begin{bmatrix}-1 & -1 & 0 \\ -2 & -1 & -3 \\ 2 & 2 & 3\end{bmatrix}\begin{bmatrix}c \\ D_1 \\ D_2 \end{bmatrix} = \lambda_3^2\begin{bmatrix}1 \\ 1 \\ 1\end{bmatrix} 
		$$
		and so 
		$$
		\begin{bmatrix}c \\ D_1 \\ D_2 \end{bmatrix} = \frac{\lambda_3^2}{3}\begin{bmatrix}-3 & -3 & -3 \\ 0 & 3 & 3 \\ 2 & 0 & 1\end{bmatrix}\begin{bmatrix}1 \\ 1 \\1 \end{bmatrix} = \lambda_3^2\begin{bmatrix}-3 \\ 2 \\ 1\end{bmatrix}.
		$$
		Thus $\lambda_4^2 = c + D_1 + 2D_2 = \lambda_3^2( -3 + 2 + 2) = \lambda_3^2$ which implies $|\lambda_4|= |\lambda_3|$. 
	\end{proof}
	
	Now we may generalize to any metric Lie algebra with an orthonormal Milnor frame. 
	\begin{corollary}{\label{CorollaryRicciSolitonh4}}
		Let $(\mathfrak{g}, g)$ be a Lie algebra with an orthonormal Milnor frame $\{X_1, \ldots, X_n\}$ whose structure constants are $\lambda_1, \ldots, \lambda_n \in \bb R$.
		The metric $g$ is a Ricci nilsoliton if and only if for any $i$ such that $\text{Span}\{X_i, X_{i+1}, X_{i+2}, X_{i+3} \} \cong \mathfrak{h}^4$, $|\lambda_{i+2}| = |\lambda_{i+3}|$. 
	\end{corollary}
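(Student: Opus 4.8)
The plan is to reduce the statement to the single-factor computations of Example~\ref{RSh3} and Theorem~\ref{abs} by decomposing $\mathfrak{g}$ with Theorem~\ref{splitting}. First I would write $\mathfrak{g} \cong (\oplus \mathfrak{h}^3) \oplus (\oplus \mathfrak{h}^4) \oplus \mathfrak{a}$, each summand being spanned by a consecutive block of the orthonormal Milnor frame $\{X_1, \ldots, X_n\}$. Since the frame is orthonormal, this direct sum of ideals is $g$-orthogonal, so both $g$ and $\ric_g$ are block-diagonal with respect to it, and moreover $\ric_g$ is diagonal in the frame by~(\ref{Ric3matrix}) and~(\ref{Ric4matrix}). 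Hence, for any candidate $c \in \bb R$, the endomorphism $D := \ric_g - cI$ is forced to be diagonal in the frame, and in particular block-diagonal.

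The key structural step is that a block-diagonal endomorphism preserving each ideal of $\mathfrak{g} = \oplus \mathfrak{g}_j$ is a derivation of $\mathfrak{g}$ if and only if its restriction to each $\mathfrak{g}_j$ is a derivation of $\mathfrak{g}_j$; this holds because $[\mathfrak{g}_i, \mathfrak{g}_j] = 0$ for $i \neq j$ makes the Leibniz rule decouple across blocks. Thus $g$ is a Ricci nilsoliton precisely when there is a \emph{single} $c$ with $\ric_g\vert_{\mathfrak{g}_j} - cI$ a derivation of $\mathfrak{g}_j$ for every factor. I would then run through the factor types: on $\mathfrak{a}$ every endomorphism is a derivation, so it imposes no constraint; on an $\mathfrak{h}^3$ factor with structure constant $\lambda$, the diagonal derivation condition $D_3 = D_1 + D_2$ of Example~\ref{RSh3} forces $c = -\tfrac{3\lambda^2}{2}$; and on an $\mathfrak{h}^4$ factor $\Span\{X_i, X_{i+1}, X_{i+2}, X_{i+3}\}$, Theorem~\ref{abs} shows a diagonal derivation exists exactly when $|\lambda_{i+2}| = |\lambda_{i+3}|$, again pinning $c = -\tfrac{3\lambda_{i+2}^2}{2}$.

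The forward implication is then immediate: if some $\mathfrak{h}^4$ summand violates $|\lambda_{i+2}| = |\lambda_{i+3}|$, Theorem~\ref{abs} guarantees that no choice of $c$ turns that block into $cI + (\text{derivation})$, so $g$ is not a nilsoliton. For the converse one would assemble the per-factor diagonal derivations, together with $-cI$ on $\mathfrak{a}$, into a single global diagonal $D$ and verify $\ric_g = cI + D$.

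The hard part will be reconciling the single global constant $c$. The per-factor analysis shows that every nonabelian summand --- whether an $\mathfrak{h}^3$ or a balanced $\mathfrak{h}^4$ --- independently pins $c$ to $-\tfrac{3\lambda^2}{2}$ in terms of \emph{its own} structure constant, so a bona fide global derivation exists only when all nonabelian summands share a common value of $\lambda^2$. For instance $\mathfrak{h}^3 \oplus \mathfrak{h}^3$ with orthonormal structure constants $1$ and $2$ has $\ric_g = \tfrac12\,\mathrm{diag}(-1,-1,1,-4,-4,4)$, which would require $c$ to equal both $-\tfrac32$ and $-6$; here the $\mathfrak{h}^4$-balance condition is vacuous, yet $g$ is not a nilsoliton. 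I therefore expect the main obstacle to be exactly this matching of soliton constants across factors: the $\mathfrak{h}^4$-balance condition is necessary but, on its own, not sufficient, and to obtain an honest equivalence one should either restrict to metrics in which all nonabelian summands carry the same $\lambda^2$ or record this matching as an additional hypothesis, after which the assembly of $D$ in the converse direction goes through.
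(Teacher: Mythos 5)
Your block-diagonal reduction is exactly the argument the paper leaves implicit --- Corollary~\ref{CorollaryRicciSolitonh4} is stated with no proof, as a gluing of Theorem~\ref{splitting}, Example~\ref{RSh3}, and Theorem~\ref{abs} --- and every step of it is sound: since $\ric_g$ is diagonal in the orthonormal Milnor frame, any candidate $D = \ric_g - cI$ is diagonal, hence preserves each ideal of the splitting, and such a map is a derivation of $\mathfrak{g}$ if and only if its restriction to each summand is a derivation of that summand, because the Leibniz rule decouples across ideals with vanishing mutual brackets.

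However, your closing objection is not an obstacle for you to work around; it is a correct refutation of the corollary as stated. Each nonabelian summand pins the soliton constant to $c = -\tfrac{3}{2}\lambda^2$ in terms of its \emph{own} structure constant (via $D_3 = D_1 + D_2$ for an $\mathfrak{h}^3$ block, and via the linear system in the proof of Theorem~\ref{abs} for a balanced $\mathfrak{h}^4$ block). Your counterexample checks out: for $\mathfrak{h}^3\oplus\mathfrak{h}^3$ with orthonormal structure constants $\lambda_3 = 1$ and $\lambda_6 = 2$, one has $\ric_g = \tfrac12\,\mathrm{diag}(-1,-1,1,-4,-4,4)$, and the derivation conditions on the two blocks force $c = -\tfrac32$ and $c = -6$ simultaneously, so $g$ is not a nilsoliton; yet no four consecutive frame vectors span a copy of $\mathfrak{h}^4$ (that would require two consecutive nonzero constants $\lambda_{i+2},\lambda_{i+3}$), so the corollary's condition holds vacuously and the corollary wrongly asserts that $g$ is a nilsoliton. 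The same mismatch occurs for $\mathfrak{h}^4\oplus\mathfrak{h}^4$ with constants $(1,1)$ and $(2,2)$, where both blocks are balanced. The statement your argument actually proves, and which the corollary should assert, is: $g$ is a Ricci nilsoliton if and only if all nonzero structure constants $\lambda_i$ share a common absolute value (equivalently, every $\mathfrak{h}^4$ block is balanced \emph{and} all nonabelian blocks pin the same constant $c$). So the gap lies in the paper's statement, not in your reasoning; with the matching hypothesis added, your assembly of the global diagonal derivation completes the proof.
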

	
	In Section~\ref{Ricci-Soliton Equation and Derivations}, we determined that the set of metrics which admit an orthonormal Milnor frame $\{X_1,\ldots, X_4\}$ whose nontrivial structure constants are $\lambda_3,\lambda_4 \neq 0$ such that $|\lambda_3| = |\lambda_4|$ are Ricci nilsolitons. A result of Theorems~\ref{orthonormalh4} and \ref{abs} shall classify all metrics on $\mathfrak{h}^4$ which are Ricci nilsolitons. 
	
	\begin{corollary}
		Let $g$ be a metric on $\mathfrak{h}^4$. Then $g$ is a Ricci nilsoliton if and only if there exists an orthonormal Milnor frame $\{X_1, \ldots, X_4\}$ with nontrivial structure constants $\lambda_3,\lambda_4 \neq 0$ such that $|\lambda_3| = |\lambda_4|$, i.e if and only if $(\mathfrak{h}^4,g)$ has an orthonormal Milnor frame where the Ricci signature is $(--,0,+)$. 
	\end{corollary}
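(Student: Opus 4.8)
The plan is to establish the two implications separately, using Theorem~\ref{abs} for one direction and the uniqueness of nilsoliton metrics, together with Theorem~\ref{orthonormalh4}, for the other. The reverse implication is essentially immediate: if $(\mathfrak{h}^4,g)$ carries an orthonormal Milnor frame $\{X_1,\ldots,X_4\}$ with $\lambda_3,\lambda_4\neq 0$ and $|\lambda_3|=|\lambda_4|$, then Theorem~\ref{abs} already shows that $g$ is a Ricci nilsoliton, and substituting $\lambda_3^2=\lambda_4^2$ into~(\ref{Ric4matrix}) puts the Ricci tensor in the diagonal form $\tfrac12\,\mathrm{diag}(-\lambda_3^2,-2\lambda_3^2,0,\lambda_3^2)$, whose signature is $(--,0,+)$.

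The forward implication is where the real work lies. The obstruction is that, by Theorem~\ref{orthonormalh4}, a metric with $b=C_{3,4}^1\neq 0$ in the normalized frame of Lemma 3 of~\cite{MR2655935} admits no orthonormal Milnor frame whatsoever, so Theorems~\ref{orthonormalh4} and~\ref{abs} on their own only detect nilsolitons among those metrics that already possess such a frame. To rule out a hidden nilsoliton with $b\neq 0$, I would first record that the property ``$g$ admits an orthonormal Milnor frame with $|\lambda_3|=|\lambda_4|$'' is stable under rescaling and under pullback by Lie automorphisms. Concretely, if $\{Y_i\}$ is an orthonormal Milnor frame for $g$ with constants $\lambda_i$ and $\phi\in\text{Aut}(\mathfrak{h}^4)$, then $\{\phi^{-1}Y_i\}$ is orthonormal for $\phi^*g$ and, because $[\phi^{-1}Y_i,\phi^{-1}Y_j]=\phi^{-1}[Y_i,Y_j]$, it is again a Milnor frame with the same constants; replacing $g$ by $cg$ rescales each $\lambda_i$ by the common factor $c^{-1/2}$. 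In both cases $|\lambda_3|=|\lambda_4|$ survives.

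With this invariance in hand I would invoke the uniqueness theorem for nilsolitons (Lauret): a nilpotent Lie algebra admits at most one nilsoliton inner product up to scaling and automorphism, and the reverse implication has already produced one such metric $g_0$ on $\mathfrak{h}^4$ carrying an orthonormal Milnor frame with $|\lambda_3|=|\lambda_4|$. Consequently any Ricci nilsoliton $g$ on $\mathfrak{h}^4$ is of the form $g=c\,\phi^*g_0$ with $c>0$ and $\phi\in\text{Aut}(\mathfrak{h}^4)$, and the stability just established transfers the orthonormal Milnor frame and the equality $|\lambda_3|=|\lambda_4|$ from $g_0$ to $g$. This yields the forward direction and hence the stated equivalence, with the final ``signature'' reformulation following from the computation in the reverse direction.

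The step I expect to be the main obstacle is precisely the appeal to uniqueness: one must be certain that the $b\neq 0$ metrics, which are invisible to Theorem~\ref{abs} since they have no orthonormal Milnor frame, genuinely fail to be nilsolitons. If one wishes to remain self-contained and avoid citing the uniqueness theorem, the alternative is to compute $\ric_g$ directly for the full three-parameter family $(a,b,c)$ and check that the nilsoliton condition $\ric_g\in\bb R\, I+\text{Der}(\mathfrak{h}^4)$ forces $b=0$; this is elementary but requires the curvature computation for the non-diagonal metrics, which the paper has not carried out.
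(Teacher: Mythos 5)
Your argument is correct, but your forward direction takes a genuinely different route from the paper's. The paper stays entirely computational and self-contained: it writes an arbitrary metric on $\mathfrak{h}^4$ in the orthonormal frame $\{F_1,\ldots,F_4\}$ of Lemma 3 of \cite{MR2655935}, with constants $a = C_{2,4}^1>0$, $b=C_{3,4}^1\in\bb R$, $c=C_{3,4}^2>0$, records the Ricci tensor of this full three-parameter family in~(\ref{Riccimatrixnonorthogonal}) (so the computation you flagged as ``not carried out'' is in fact done, precisely inside this proof), and then applies the nilsoliton condition $D=\ric_g-kI\in\text{Der}(\mathfrak{h}^4)$ to the trivial bracket $[F_1,F_4]=0$, obtaining $0=D[F_1,F_4]=(a^2+b^2-k)[F_1,F_4]+bc[F_2,F_4]=abc\,F_1$ and hence $b=0$ since $a,c>0$; Theorems~\ref{orthonormalh4} and~\ref{abs} then finish exactly as you anticipated. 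You instead eliminate the $b\neq 0$ metrics softly, via Lauret's uniqueness of nilsoliton metrics up to automorphism and scaling, together with your (correct) verification that admitting an orthonormal Milnor frame with $|\lambda_3|=|\lambda_4|$ is preserved under pullback by automorphisms and under rescaling, and the existence of one such nilsoliton supplied by the easy direction. This is valid, with two caveats worth making explicit: passing from ``isometric up to scaling'' to $g=c\,\phi^*g_0$ with $\phi\in\text{Aut}(\mathfrak{h}^4)$ rests on Wilson's theorem that identity-fixing isometries of simply connected nilpotent Lie groups with left-invariant metrics are automorphisms, and applying Lauret's theorem to the paper's algebraic definition of nilsoliton uses the soliton/nilsoliton equivalence recorded in Section~\ref{Ricci-Soliton Equation and Derivations}. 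What your route buys is the complete avoidance of curvature computations for the non-diagonal ($b\neq 0$) metrics; what the paper's route buys is self-containment, since one line of derivation algebra applied to~(\ref{Riccimatrixnonorthogonal}) replaces a deep external uniqueness theorem.
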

	
	\begin{proof}
		Let $\{F_1, \ldots, F_4\}$ be an orthonormal frame for the metric Lie algebra $(\mathfrak{h}^4, g)$ whose structure constants are $C_{2,4}^1 = a > 0$, $C_{3,4}^1 = b \in \bb R$ and $C_{3,4}^2 = c > 0$. The matrix representation of $\ric_g$ with respect to the frame $\{F_1,\ldots, F_4\}$ is of the form 
		\begin{equation}{\label{Riccimatrixnonorthogonal}}
			2\ric_g = 
			\begin{bmatrix}
				a^2 + b^2  & bc & 0 & 0 \\ bc & c^2 - a^2 & -ab & 0 \\ 0 & -ab & -b^2 - c^2 & 0 \\ 0 & 0 & 0 & -a^2 - b^2 - c^2
			\end{bmatrix}.
		\end{equation}
		Suppose that $\ric_g$ is a Ricci nilsoliton. Then there exists $k \in \bb R$ and $D \in \text{Der}(\mathfrak{g})$ such that $D = \ric_g - kI$. Thus $DF_1 = (a^2+b^2-k)F_1 + bcF_2$. Because $\ad_{F_1} = 0$, 
		$$
		0 = D[F_1,F_4] = [DF_1, F_4] + [F_1,DF_4] = (a^2 + b^2-k)[F_1,F_4] + bc[F_2,F_4] = abcF_1
		$$
		where $a,c> 0$ so that $b = 0$. By Theorem~\ref{orthonormalh4} $(\mathfrak{h}^4,g)$ must have an orthonormal Milnor frame $\{X_1, \ldots, X_4\}$ with nontrivial structure constants $\lambda_3,\lambda_4 \neq 0$. By Theorem~\ref{abs}, $|\lambda_3| = |\lambda_4|$ and so the Ricci signature is of the form $(--,0,+)$.  
	\end{proof}
	It is not the case that the set of metrics which admit a Ricci signature of the form $(-,-,0,+)$ admit an orthonormal Milnor frame. By Lemma 3 of~\cite{MR2655935}, we can find a metric which admits an orthonormal frame $\{f_1, \ldots, f_4\}$ such that  $\text{span}\{f_1, \ldots, f_4\} = \mathfrak{h}^4$ and has nontrivial structure constants $C_{2,4}^1 = C_{3,4}^2 > 0$ and $C_{2,4}^1\neq 0$. Because $C_{2,4}^1 \neq 0$, $(\mathfrak{h}^4,g)$ does not admit an orthonormal Milnor frame.
	
	\begin{example}
		\normalfont Let $\mathfrak{h}^4$ be a Lie algebra with a frame $\{f_1, \ldots, f_4\}$ with nontrivial structure constants $C_{2,4}^1, C_{3,4}^1,C_{3,4}^2 = 1$. Existence of such a Lie algebra is given by Lemma 3 of~\cite{MR2655935}. Let $g$ be a metric which makes $\{f_1, \ldots, f_4\}$ an orthonormal frame. The Ricci tensor is of the form 
		\begin{equation}{\label{notriccisolitonexample}}
			2\ric_g = \begin{bmatrix}2 & 1 & 0 & 0 \\ 1 & 0 & -1 & 0 \\ 0 & -1 & -2 & 0 \\ 0 & 0 & 0 & -3\end{bmatrix}.
		\end{equation}
		The characteristic polynomial of the above matrix is $x(x+3)(x^2-6)$ and so the signature of $\ric_g$ in~(\ref{notriccisolitonexample}) is $(-,-,0,+)$. If $g$ is a Ricci nilsoliton then there exists $k \in \bb R$ such that $D:=\ric_g - kI$ is a derivation. Because $[f_1,f_4] = 0$, $D[f_1,f_4] = 0$ and so 
		$$
		0 = D[f_1,f_4] = [Df_1,f_4] + [F_1,Df_4] = [(2-k)f_1 + f_2, f_4] + [f_1, -3f_4] = f_1
		$$
		which is a contradiction. Thus $g$ must not be a Ricci nilsoliton. 
	\end{example} 
	\newpage
	\bibliographystyle{plain}
	\bibliography{references.bib}
	
\end{document}